\newcommand{\Z}{\mathbb Z}
\newcommand{\Q}{\mathbb Q}
\newcommand{\C}{\mathbb C}
\newcommand{\F}{\mathbb F}
\newcommand{\Gal}{\mathrm{Gal}}
\theoremstyle{plain}
\newtheorem{thm}{Theorem}[section]
\newtheorem*{thm*}{Theorem}
\newtheorem{prop}[thm]{Proposition}
\newtheorem{lem}[thm]{Lemma}
\newtheorem{cor}[thm]{Corollary}
\theoremstyle{definition}
\newtheorem{remark}[thm]{Remark}
\newtheorem*{acknowledgment}{Acknowledgment}
\begin{document}
\title[Lifts of multiplication matrices of Gaussian periods]
{Davenport and Hasse's theorems and lifts of multiplication matrices of Gaussian periods}

\author[A. Hoshi]{Akinari Hoshi}
\address[A. Hoshi]{Department of Mathematics, Niigata University, Niigata 950-2181, Japan}
\email{hoshi@math.sc.niigata-u.ac.jp}

\author[K. Kanai]{Kazuki Kanai}
\address[K. Kanai]{Graduate School of Science and Technology, Niigata University, Niigata 950-2181, Japan}
\email{kanai@m.sc.niigata-u.ac.jp}

\footnote[0]{\textit{$2020$ Mathematics Subject
Classification}. 11D09, 11R18, 11T22, 11T24.}
\footnote[0]{\textit{Keywords and phrases}. Jacobi sums, Gauss sums, Gaussian periods, multiplication matrices, finite Fourier transform.}
\footnote[0]{This work was partially supported by JSPS KAKENHI Grant Number 19K03418.}


\begin{abstract}
Let $e \geq 2$ be an integer, $p^r$ be a prime power with 
$p^r \equiv 1\ ({\rm mod}\ e)$ 
and $\eta_r(i)$ be Gaussian periods of degree $e$ for $\F_{p^r}$. 
By the dual form of Davenport and Hasse's lifting theorem on Gauss sums, 
we establish lifts of the multiplication matrices of the Gaussian periods 
$\eta_r(0),\ldots,\eta_r(e-1)$ which are defined by F. Thaine. 
We also give some examples of the explicit lifts for prime degree $e$ 
with $3\leq e\leq 23$ which also illustrate relations among lifts of 
Jacobi sums, Gaussian periods and multiplication matrices of Gaussian periods. 
\end{abstract}

\maketitle
\tableofcontents

\section{Introduction}\label{S1}

Let $e \geq 2$ be an integer and 
$p^r$ be a prime power with $p^r \equiv 1\ ({\rm mod}\ e)$. 
Write $p^r=ef+1$. 
Let $\F_{p^{r}}$ be the finite field of $p^r$ elements 
and $\gamma$ be a fixed generator of 
$\F_{p^r}^{\times}=\F_{p^{r}}\setminus \{0\}$. 
Let $\zeta_n=e^{2\pi i/n}$ be an 
$n$-th root of unity. 
For $0 \leq i \leq e-1$, 
{\it Gaussian periods $\eta_r(i)$ of degree $e$ for $\F_{p^r}$} 
are defined by
\begin{align*}
\eta_r(i) := \sum^{f-1}_{j=0}\zeta_p^{\mathrm{Tr}(\gamma^{ej+i})}
\end{align*}
where $\mathrm{Tr}$ is the trace map $\mathrm{Tr} : \F_{p^r} \to \F_p$, 
and {\it the period polynomial $P_{e,r}(X)$ of degree $e$ for $\F_{p^r}$} 
is defined by $P_{e,r}(X)= \prod^{e-1}_{i=0}\left(X-\eta_r(i)\right)\in\Z[X]$. 
Note that $\eta_{r}(i)$ depends on the choice of $\gamma$. 

We recall notion of Jacobi sums, Gauss sums and cyclotomic numbers 
and their relations (see Berndt, Evans and Williams \cite{BEW98}). 
For a nontrivial character $\psi$ on $\F_{p^r}^{\times}$ and 
the trivial character $\varepsilon$ on $\F_{p^r}^{\times}$, 
we extend them to $\F_{p^r}$ by setting $\psi(0)=0$ and $\varepsilon(0)=1$. 
Let $\psi_1$, $\psi_2$ be characters on $\F_{p^r}$. 
{\it Jacobi sums $J_r(\psi_1,\psi_2)$ and $J_r^{\ast}(\psi_1,\psi_2)$ for $\F_{p^r}$} are defined by
\[
J_r(\psi_1,\psi_2):=\sum_{\alpha \in \F_{p^r}}\psi_1(\alpha)\psi_2(1-\alpha)
\quad \mbox{and}\quad
J_r^{\ast}(\psi_1,\psi_2):=\sum_{\substack{\alpha \in \F_{p^r}\\ \alpha \neq 0,1}}\psi_1(\alpha)\psi_2(1-\alpha).
\]
If $\psi_1, \psi_2 \neq \varepsilon$, then 
$J_{r}(\psi_1,\psi_2)=J_{r}^{\ast}(\psi_1,\psi_2)$ 
although $J_{r}(\varepsilon, \psi_2)=J_{r}(\psi_1, \varepsilon)=0$, 
$J_{r}(\varepsilon, \varepsilon)=p^r$ and 
$J_{r}^{\ast}(\varepsilon,\psi_2)=J_{r}^{\ast}(\psi_1, \varepsilon)=-1$, 
$J_{r}^{\ast}(\varepsilon, \varepsilon)=p^r-2$.
{\it Gauss sums $G_{r}(\psi_1)$ and $G_{r}^{\ast}(\psi_1)$ for $\F_{p^r}$} 
are defined to be
\[
G_{r}(\psi_1):=\sum_{\alpha \in \F_{p^r}}\psi_1(\alpha)\zeta_p^{{\rm Tr}(\alpha)}
\quad \mbox{and}\quad
G_{r}^{\ast}(\psi_1):=\sum_{\alpha \in \F_{p^r}^{\times}}\psi_1(\alpha)\zeta_p^{{\rm Tr}(\alpha)}.
\]
If $\psi_1 \neq \varepsilon$, then $G_{r}(\psi_1)=G_{r}^{\ast}(\psi_1)$ 
although $G_{r}(\varepsilon)=0$ and $G_{r}^{\ast}(\varepsilon)=-1$. 
We have the well-known relations
\[
J_r(\psi_1,\psi_2)=\frac{G_{r}(\psi_1)G_{r}(\psi_2)}{G_{r}(\psi_1\psi_2)}
\quad \mbox{and}\quad
J_r^{\ast}(\psi_1,\psi_2)=\frac{G_{r}^{\ast}(\psi_1)G_{r}^{\ast}(\psi_2)}{G_{r}^{\ast}(\psi_1\psi_2)}
\]
whenever $\psi_1\psi_2 \neq \varepsilon$.

The following theorem is the Davenport and Hasse's lifting theorem 
(see also Weil \cite[pages 503--505]{Wei49}, \cite[page 360, Theorem 11.5.2]{BEW98}):
\begin{thm}[{Davenport and Hasse \cite[Relation (0.8)]{DH35}}]\label{t1.1}
Let $e \geq 2$ be an integer, $p^r$ be a prime power  
and $\psi$ be a nontrivial character on $\F_{p^r}$. 
Then, for any integer $n \geq 1$, we have
\[
G_{nr}(\psi^{\prime})=(-1)^{n-1}G_r(\psi)^n
\]
where $\psi^{\prime}$ is the lift of $\psi$ 
from $\F_{p^r}$ to $\F_{p^{nr}}$ defined by 
$\psi^{\prime}(\alpha) = \psi(\mathrm{Nr}(\alpha))$ 
and $\mathrm{Nr}$ is the norm map. 
In particular, if $\psi_1$, $\psi_2$ and $\psi_1\psi_2$ are 
nontrivial characters on $\F_{p^r}$, then we have 
\[
J_{nr}(\psi_1^{\prime}, \psi_2^{\prime})=(-1)^{n-1}J_r(\psi_1, \psi_2)^n.
\]
\end{thm}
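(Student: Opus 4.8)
The plan is to prove the Gauss sum identity and then read off the Jacobi sum identity from the relation $J_r(\psi_1,\psi_2)=G_r(\psi_1)G_r(\psi_2)/G_r(\psi_1\psi_2)$ recalled in the introduction. Put $q=p^r$ and fix the nontrivial additive character $\lambda(x)=\zeta_p^{\mathrm{Tr}_{\F_q/\F_p}(x)}$ of $\F_q$; by transitivity of norm and trace in the tower $\F_p\subseteq\F_q\subseteq\F_{q^n}$, the lift in the statement is exactly the Gauss sum over $\F_{q^n}=\F_{p^{nr}}$ for the multiplicative character $\psi'=\psi\circ\mathrm{Nr}_{\F_{q^n}/\F_q}$ and the additive character $\lambda\circ\mathrm{Tr}_{\F_{q^n}/\F_q}$. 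To each monic polynomial $P\in\F_q[X]$ I attach $c(P)=(-1)^{\deg P}P(0)$, the product with multiplicity of the roots of $P$ in $\overline{\F_q}$, and $s(P)$, the sum of those roots (i.e.\ minus the coefficient of $X^{\deg P-1}$), and set $\Lambda(P)=\psi(c(P))\,\lambda(s(P))$, with $\Lambda(1)=1$ and $\psi(0)=0$. Since the product of roots is multiplicative and the sum of roots additive under multiplication of polynomials, $\Lambda$ is completely multiplicative, so by unique factorization in $\F_q[X]$ the formal power series
\[
L(t):=\sum_{P\text{ monic}}\Lambda(P)\,t^{\deg P}=\prod_{P\text{ monic irreducible}}\bigl(1-\Lambda(P)\,t^{\deg P}\bigr)^{-1}
\]
admits the displayed Euler product.

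First I would evaluate $L(t)$ directly. The monic polynomials of degree $m$ are parametrized bijectively by their lower coefficients, and for $m\geq2$ the quantities $s(P)$ and $c(P)$ depend on two distinct free coefficients, each through an affine bijection of $\F_q$; hence the coefficient of $t^m$ in $L(t)$ contains the factor $\sum_{x\in\F_q}\lambda(x)=0$ and vanishes. The degree-$1$ term is $\sum_{x\in\F_q}\psi(x)\lambda(x)=G_r(\psi)$ and the degree-$0$ term is $1$, so $L(t)=1+G_r(\psi)\,t$, whence $\log L(t)=\sum_{n\geq1}\frac{(-1)^{n-1}}{n}G_r(\psi)^n\,t^n$.

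Next I would compute $\log L(t)$ from the Euler product. Expanding $-\log(1-u)=\sum_{k\geq1}u^k/k$ and collecting powers of $t$, the coefficient of $t^n$ in $\log L(t)$ equals $\frac1n\sum_{d\mid n}d\sum_{\deg P=d}\Lambda(P)^{n/d}$, the inner sum over monic irreducibles. If $P$ is such a polynomial and $\alpha\in\F_{q^n}$ is any root of it, transitivity of norm and trace for $\alpha\in\F_{q^d}\subseteq\F_{q^n}$ gives $c(P)^{n/d}=\mathrm{Nr}_{\F_{q^n}/\F_q}(\alpha)$ and $(n/d)s(P)=\mathrm{Tr}_{\F_{q^n}/\F_q}(\alpha)$, so $\Lambda(P)^{n/d}=\psi'(\alpha)\,\lambda(\mathrm{Tr}_{\F_{q^n}/\F_q}(\alpha))$. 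Since the pairs (monic irreducible $P$ of degree $d\mid n$, root $\alpha$ of $P$) biject with $\F_{q^n}$, summing over $d$, over $P$, and over the $d$ roots of each $P$ yields
\[
\sum_{d\mid n}d\sum_{\deg P=d}\Lambda(P)^{n/d}=\sum_{\alpha\in\F_{q^n}}\psi'(\alpha)\,\lambda(\mathrm{Tr}_{\F_{q^n}/\F_q}(\alpha))=G_{nr}(\psi').
\]
Comparing this with the expansion of $\log(1+G_r(\psi)t)$ from the previous paragraph gives $G_{nr}(\psi')=(-1)^{n-1}G_r(\psi)^n$.

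For the Jacobi sum statement, note that $\mathrm{Nr}_{\F_{q^n}/\F_q}$ maps onto $\F_q^\times$, so $\psi_1'$, $\psi_2'$ and $\psi_1'\psi_2'=(\psi_1\psi_2)'$ are all nontrivial and the relation $J_{nr}(\psi_1',\psi_2')=G_{nr}(\psi_1')G_{nr}(\psi_2')/G_{nr}((\psi_1\psi_2)')$ applies; substituting the Gauss sum identity, the three factors of $(-1)^{n-1}$ collapse to a single one and $J_{nr}(\psi_1',\psi_2')=(-1)^{n-1}J_r(\psi_1,\psi_2)^n$. The one genuinely delicate point is the bookkeeping in the Euler-product step: one must check that $\Lambda(P)^{n/d}$, summed over irreducible $P$ of degree $d\mid n$ and over their roots, reassembles precisely the Gauss sum over $\F_{q^n}$ — i.e.\ that the norm and trace down to $\F_q$ are correctly, and with the right signs, recovered from the constant term and the subleading coefficient of a minimal polynomial. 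Everything else is formal power-series manipulation together with the vanishing of $\sum_{x\in\F_q}\lambda(x)$.
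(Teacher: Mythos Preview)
Your argument is correct; it is essentially Weil's $L$-function proof of the Davenport--Hasse relation. The paper, however, does not prove Theorem~\ref{t1.1} at all: it is quoted as a classical result, with pointers to Weil \cite[pages 503--505]{Wei49} and \cite[Theorem 11.5.2]{BEW98} for a proof, and is then used as a black box in the proof of Theorem~\ref{t1.3}. So there is no ``paper's own proof'' to compare against; your write-up is precisely the argument those references give.

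A couple of minor remarks on presentation rather than correctness. First, in your degree-$m$ vanishing step you implicitly use $m\ge 2$ so that the constant term $a_0$ and the subleading coefficient $a_{m-1}$ are distinct free coordinates; you might say this explicitly. Second, in the Euler-product step you should mention that the irreducible polynomial $P=X$ (with $c(P)=0$) contributes $0$ via $\psi(0)=0$, and that the element $\alpha=0\in\F_{q^n}$ likewise contributes $0$ to the Gauss sum via $\psi'(0)=0$, so the bijection between roots and field elements is compatible with both sides at the ``missing'' point. These are the only places a careful reader would pause; the rest is, as you say, formal.
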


{}From now on, we take the character $\chi$ of order $e$ 
on $\F_{p^r}$ with $\chi(\gamma)=\zeta_e$ and $\chi(0)=0$ 
where $\langle\gamma\rangle=\F_{p^r}^\times$. 
For $0 \leq i,j \leq e-1$, 
we simply write 
\[
J_r(i,j):= J_r(\chi^i,\chi^j)
\quad \mbox{and}\quad
J_r^{\ast}(i,j):=J_r^{\ast}(\chi^i,\chi^j).
\]
We have
\[
J_r(i,j) = J_r^{\ast}(i,j) +\delta_{i,0}+\delta_{0,j}
\]
where $\delta_{i,j}$ is Kronecker's delta. 

\begin{remark}
{\rm 
(1) If we adopt the another manner $\varepsilon(0)=0$, then we have 
$J_r(i,j)=J_r^{\ast}(i,j)$ (e.g. Myerson \cite{Mye81}, 
Parnami, Agrawal and Rajwade \cite{PAR82}, Katre and Rajwade \cite{KR85a}).\\
(2) We may take another Jacobi sum
\begin{align*}
J_r^{\prime}(i,j)=\sum_{\alpha \in \F_{p^r}}\chi^i(\alpha)\chi^j(1+\alpha)
\end{align*}
which satisfies $J_r^{\prime}(i,j) = \chi(-1)^i J_r(i,j)=(-1)^{fi}J_r(i,j)$ 
(see e.g. \cite{PAR82}, \cite{KR85a}).
}
\end{remark}

For $0 \leq i,j \leq e-1$, 
{\it cyclotomic numbers ${\rm Cyc}_r(i,j)$ of order $e$ for $\F_{p^r}$} 
are defined by
\begin{align*}
{\rm Cyc}_r(i,j) :=\# \{ (v_1, v_2) \mid 0 \leq v_1, v_2 \leq f-1, \ 1+\gamma^{ev_1+i} \equiv \gamma^{ev_2+j} \ ({\rm mod}\ p^r) \}.
\end{align*}
We have the following well-known 
relations between cyclotomic numbers ${\rm Cyc}_r(a,b)$ 
and Jacobi sums $J_r^{\ast}(i,j)$ 
(see \cite[page 79, Theorem 2.5.1]{BEW98}):
\[
\sum_{i=0}^{e-1} \sum_{j=0}^{e-1} (-1)^{fi} \zeta_e^{-(ai+bj)}J_r^{\ast}(i,j)=e^2{\rm Cyc}_r(a,b) 
\]
and
\[
(-1)^{fi}\sum_{a=0}^{e-1} \sum_{b=0}^{e-1} {\rm Cyc}_r(a,b) \zeta_e^{ai+bj}=J_r^{\ast}(i,j). 
\]
Note that both ${\rm Cyc}_r(a,b)$ and $J_r^{\ast}(i,j)$ 
depend on the choice of the fixed generator $\gamma$ of $\F_{p^{r}}^{\times}$. 

We see that the product of the Gaussian periods is represented 
by a linear combination of the Gaussian periods again 
and these coefficients are given in terms of the cyclotomic numbers 
(see \cite[page 328, Lemma 10.10.2,  page 437, Exercise 12.23]{BEW98}):
\begin{align}\label{eq1}
\eta_r(m)\eta_r(m+i)=\sum^{e-1}_{j=0} ({\rm Cyc}_r(i,j) -D_{i}f)\eta_r(m+j)
\end{align}
where $D_i = \delta_{0,i}$ (resp. $\delta_{e/2 ,i}$) if $f$ is even (resp. odd). 
It follows that the Gaussian periods are the eigenvalues of the 
$e \times e$ matrix $C_r := [{\rm Cyc}_r(i,j) - D_i f]_{0\leq i,j\leq e-1}$ 
called {\it the multiplication matrix of $\eta_r(0),\ldots,\eta_r(e-1)$} 
(see Section \ref{S2}). 
Hence the period polynomial $P_{e,r}(X)$ can be obtained 
as the characteristic polynomial ${\rm Char}_X(C_r)$ of 
the multiplication matrix $C_r$. 

F. Thaine investigated various properties and characterizations of 
Gaussian periods, cyclotomic numbers and Jacobi sums 
with applications to the construction of cyclic polynomials 
in the series of the papers \cite{Tha96}, \cite{Tha99}, \cite{Tha00}, \cite{Tha01}, \cite{Tha04}, \cite{Tha08} (see also 
Lehmer \cite{Leh88}, 
Schoof and Washington \cite{SW88}, 
Hashimoto and Hoshi \cite{HH05a}, \cite{HH05b}). 

According to Thaine \cite[Section 2]{Tha04}, 
for two $e\times e$ matrices $A=[a_{i,j}]_{0\leq i,j\leq e-1}$, 
$B=[b_{i,j}]_{0\leq i,j\leq e-1}$ 
and $d\in(\Z/e\Z)\setminus\{0\}$, 
we define {\it the $d$-composition $A \overset{d}{\ast} B$ of $A$ and $B$} as 
\[
A \overset{d}{\ast} B:= \left[\sum^{e-1}_{s=0}\sum^{e-1}_{t=0}a_{s,t}b_{ds+i,dt+j}\right]_{0\leq i,j\leq e-1}.
\]
For the multiplication matrix $C_1$ (resp. $C^{\prime}_1$) 
of Gaussian periods $\eta_1(0),\ldots,\eta_1(e-1)$ 
(resp. $\eta^{\prime}_1(0),\ldots,\eta^{\prime}_1(e-1)$) of degree $e$ for $\F_{p^1}$, 
the $d$-composition 
$C_1\overset{d}{\ast} C^{\prime}_1$ 
gives the multiplication matrix of $\theta_{d,0},\ldots,\theta_{d,e-1}$ 
where 
$\theta_{d,i}=\sum_{s=0}^{e-1}\eta_1(s)\eta^{\prime}_1(ds+i)$. 
Hence we obtain the cyclic polynomial 
${\rm Char}_X(C_1\overset{d}{\ast} C^{\prime}_1)$ 
which gives an intermediate cyclic field $\Q(\theta_{d,i})$ of degree $e$ 
in the composite bicyclic field $\Q(\eta_1(0),\eta^{\prime}_1(0))
\subset\Q(\zeta_p,\zeta_q)$ with $p\neq q$ 
(see Section \ref{S2} and Section \ref{S3}). 

In Section \ref{S4}, we regard the Gaussian periods 
$\eta_r(i)$ of degree $e$ for $\F_{p^r}$ as the function $\eta_r:\Z/e\Z\rightarrow \C$, 
$i\mapsto \eta_r(i)$ 
and the Gauss sums $G^\ast_r(\chi)$ for $\F_{p^r}$ as the function 
$G^\ast_r:\widehat{\Z/e\Z}\rightarrow \C$, $\chi\mapsto G^\ast_r(\chi)$. 
Then we find that they 
are each other's finite Fourier transform with some twist 
(see Lemma \ref{l4.1} in Section \ref{S4}) and we have:
\begin{thm}[{Davenport and Hasse's lifting theorem: the dual form}]\label{t1.3}
Let $e \geq 2$ be an 
integer and $p^r$ be a prime power with 
$p^r\equiv 1\ ({\rm mod}\ e)$. 
We regard the Gaussian periods 
$\eta_r(i)$ of degree $e$ for $\F_{p^r}$ as the function $\eta_r:\Z/e\Z\rightarrow \C$, 
$i\mapsto \eta_r(i)$. 
Then, for any integer $n \geq 1$, we have 
\[
\eta_{nr}(i) = (-1)^{n-1}\eta_r^{(n)}(i)\quad \mbox{for}\quad 0\leq i \leq e-1
\]
where 
\begin{align*}
\eta_{r}^{(n)}(i)=\sum_{k_1+\cdots+k_n\equiv\,i\,({\rm mod}\,e)\atop 0\leq k_1,\ldots,k_n\leq e-1}\eta_{r}(k_1)\cdots\eta_{r}(k_n)
\end{align*} 
is the $n$-fold product of $\eta_{r}$ with respect to the convolution product. 
\end{thm}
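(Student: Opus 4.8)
The plan is to move everything to the Fourier side, where the $n$-fold convolution power $\eta_r^{(n)}$ turns into an ordinary $n$-th power and the assertion becomes exactly Davenport and Hasse's Theorem \ref{t1.1}. First I would fix compatible generators: choose a generator $\Gamma$ of $\F_{p^{nr}}^\times$, set $\gamma:=\mathrm{Nr}(\Gamma)$, and note that since $\mathrm{Nr}(\Gamma)=\Gamma^{c}$ with $c=(p^{nr}-1)/(p^r-1)$ and $c\mid p^{nr}-1$, the order of $\gamma$ is $(p^{nr}-1)/c=p^r-1$, so $\gamma$ generates $\F_{p^r}^\times$. Using $\gamma$ and $\Gamma$ to define the respective Gaussian periods and the order-$e$ characters $\chi_r,\chi_{nr}$ (with $\chi_r(\gamma)=\chi_{nr}(\Gamma)=\zeta_e$), the Davenport--Hasse lift of $\chi_r^{a}$ is $(\chi_r^{a})'=\chi_r^{a}\circ\mathrm{Nr}$, and evaluating at $\Gamma$ gives $(\chi_r^{a})'(\Gamma)=\chi_r^{a}(\gamma)=\zeta_e^{a}=\chi_{nr}^{a}(\Gamma)$, hence $(\chi_r^{a})'=\chi_{nr}^{a}$ for every $a\in\Z/e\Z$.

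Next I would record the Fourier dictionary behind Lemma \ref{l4.1}: expanding $G_r^\ast(\chi_r^{a})=\sum_{x\in\F_{p^r}^\times}\chi_r^{a}(x)\zeta_p^{\mathrm{Tr}(x)}$ along the cosets of the subgroup $\langle\gamma^{e}\rangle$ of $e$-th powers yields
\[
G_r^\ast(\chi_r^{a})=\sum_{i=0}^{e-1}\zeta_e^{ai}\,\eta_r(i)\qquad(a\in\Z/e\Z),
\]
so that, identifying $\widehat{\Z/e\Z}$ with $\Z/e\Z$ via $a\leftrightarrow\chi_r^{a}$, the assignment $\eta_r\mapsto G_r^\ast$ is the finite Fourier transform on $\Z/e\Z$ (up to the twist recorded in Lemma \ref{l4.1}), and likewise for $nr$. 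Since the finite Fourier transform carries the convolution product to the pointwise product, the $n$-fold convolution power obeys $\widehat{\eta_r^{(n)}}(a)=\bigl(\widehat{\eta_r}(a)\bigr)^{n}=\bigl(G_r^\ast(\chi_r^{a})\bigr)^{n}$.

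Finally I would feed in Theorem \ref{t1.1}. For $a$ with $e\nmid a$ the character $\chi_r^{a}$ is nontrivial, hence $G_r^\ast=G_r$ and $G_{nr}^\ast=G_{nr}$ on these characters, and the theorem together with the first step gives
\[
\widehat{\eta_{nr}}(a)=G_{nr}^\ast(\chi_{nr}^{a})=G_{nr}^\ast\bigl((\chi_r^{a})'\bigr)=(-1)^{n-1}\bigl(G_r^\ast(\chi_r^{a})\bigr)^{n}=(-1)^{n-1}\,\widehat{\eta_r^{(n)}}(a).
\]
For $a\equiv 0\ (\mathrm{mod}\ e)$ one has $\chi_r^{a}=\chi_{nr}^{a}=\varepsilon$, $G_r^\ast(\varepsilon)=G_{nr}^\ast(\varepsilon)=-1$, and $(-1)^{n-1}(-1)^{n}=-1$, so the same identity holds for that index as well. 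Thus $\widehat{\eta_{nr}}=(-1)^{n-1}\widehat{\eta_r^{(n)}}$ on $\Z/e\Z$, and applying the inverse Fourier transform (which is injective) yields $\eta_{nr}(i)=(-1)^{n-1}\eta_r^{(n)}(i)$ for $0\le i\le e-1$.

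The Fourier formalism is routine; the points that actually require care are in the first step — checking that the norm of a generator is again a generator, so that the two Gaussian-period systems rest on compatible data, and that with this compatibility the Davenport--Hasse lift $\chi_r^{a}\mapsto(\chi_r^{a})'$ is precisely the power $\chi_{nr}^{a}$ entering $\eta_{nr}$ — together with the separate handling of the trivial-character index $a\equiv 0$, where Theorem \ref{t1.1} does not literally apply and one must pass to the starred Gauss sums. I expect the generator/lift compatibility to be the main obstacle, since mishandling it would insert a spurious power-of-$c$ twist into the index $i$.
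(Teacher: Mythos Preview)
Your proposal is correct and follows essentially the same route as the paper: pass to the Fourier side via Lemma \ref{l4.1}, use that convolution becomes pointwise product, invoke Theorem \ref{t1.1}, and invert. The paper's proof is four lines and leaves implicit the two points you single out for care---the compatibility of generators (so that the lift $(\chi_r^{a})'$ really is $\chi_{nr}^{a}$) and the trivial-character index $a\equiv 0$---so your version is a more honest write-up of the same argument rather than a different approach.
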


By using Theorem \ref{t1.3}, 
we get our main theorem which gives lifts of the multiplication matrix $C_r$ 
of the Gaussian periods $\eta_r(0),\ldots,\eta_r(e-1)$ of degree $e$ for $\F_{p^{r}}$
via Thaine's $(-1)$-composition $\overset{-1}{\ast}$: 
\begin{thm}\label{t1.4}
Let $e \geq 2$ be an  
integer and $p^r$ be a prime power with $p^r\equiv 1\ ({\rm mod}\ e)$.
Let $C_r = [{\rm Cyc}_r(i,j) - D_i f]_{0\leq i,j\leq e-1}$ be the multiplication matrix of the Gaussian periods $\eta_r(0),\ldots,\eta_r(e-1)$ of degree $e$ for $\F_{p^{r}}$. 
Then, for any integer $n \geq 1$, we have 
\[
C_{nr} = (-1)^{n-1}C_r^{(n)}
\]
where $C_r^{(n)}$ is the $n$-fold product of $C_r$ with respect to the $(-1)$-composition $\overset{-1}{\ast}$. 
In particular, we have $P_{e,nr}(X)={\rm Char}_X((-1)^{n-1}C_r^{(n)})$. 
\end{thm}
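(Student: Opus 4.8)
The plan is to deduce Theorem~\ref{t1.4} from Theorem~\ref{t1.3} together with the behaviour of multiplication matrices under Thaine's $(-1)$-composition.

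First I would record the basic fact underlying everything: if $\alpha\colon\Z/e\Z\to\C$ satisfies a multiplication relation $\alpha(m)\alpha(m+i)=\sum_{j=0}^{e-1}a_{i,j}\alpha(m+j)$ for all $i,m$, then the vectors $w^{(m)}=(\alpha(m+j))_{0\le j\le e-1}$ are eigenvectors of $A=[a_{i,j}]$ with eigenvalues $\alpha(m)$; moreover $w^{(m)}$ is the $m$-th cyclic shift of $w^{(0)}$, so the family $\{w^{(m)}\}_m$ spans $\C^e$ precisely when every coefficient of the finite Fourier transform of $\alpha$ is nonzero. Applying this to the Gaussian period function $\eta_{r}$ and invoking Lemma~\ref{l4.1}, which identifies those Fourier coefficients with (twisted) Gauss sums $G_{r}^{\ast}(\chi^k)$ — all nonzero, since $|G_{r}^{\ast}(\chi^k)|=p^{r/2}$ for $k\not\equiv 0$ and $G_{r}^{\ast}(\varepsilon)=-1$ — I conclude that the multiplication matrix of $\eta_{r}$ is \emph{uniquely determined}: any $e\times e$ matrix satisfying the multiplication relation for $\eta_{r}$ equals $C_{r}$. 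The same holds at every level, since $p^{nr}\equiv 1\ ({\rm mod}\ e)$.

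Next I would establish the composition step. A direct expansion — the same index manipulation that proves the $r=1$ statement recalled in Section~\ref{S1}, which never used $r=1$ — shows: if $A$ is a multiplication matrix for $\alpha$ and $B$ one for $\beta$, then $A\overset{-1}{\ast}B$ is a multiplication matrix for the convolution $\alpha\ast\beta$, $(\alpha\ast\beta)(i)=\sum_{s}\alpha(s)\beta(i-s)$. Since $\overset{-1}{\ast}$ is just the group convolution on $(\Z/e\Z)^2$, it is associative, so the $n$-fold product $C_{r}^{(n)}$ is well defined, and by induction on $n$ it is a multiplication matrix for the $n$-fold convolution $\eta_{r}^{(n)}$. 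Also note that if $M$ is a multiplication matrix for $\beta$ and $\lambda\in\C^\times$, then $\lambda M$ is one for $\lambda\beta$.

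Finally I would combine the pieces. By Theorem~\ref{t1.3}, $\eta_{r}^{(n)}=(-1)^{n-1}\eta_{nr}$ as functions on $\Z/e\Z$, hence $(-1)^{n-1}C_{r}^{(n)}$ is a multiplication matrix for $(-1)^{n-1}\eta_{r}^{(n)}=\eta_{nr}$. Since $C_{nr}$ is also a multiplication matrix for $\eta_{nr}$, and such a matrix is unique by the first step, $C_{nr}=(-1)^{n-1}C_{r}^{(n)}$. Taking characteristic polynomials and using $P_{e,nr}(X)={\rm Char}_X(C_{nr})$ yields the last assertion. The main obstacle is exactly this uniqueness: it is what lets us pass from ``$(-1)^{n-1}C_r^{(n)}$ is \emph{a} multiplication matrix of $\eta_{nr}$'' to ``it equals $C_{nr}$'', and it rests on the nonvanishing of the Fourier coefficients of $\eta_{nr}$, i.e.\ on the nonvanishing of Gauss sums via Lemma~\ref{l4.1}; the composition identity itself, while slightly lengthy, is routine.
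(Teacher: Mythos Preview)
Your argument is correct and follows essentially the same route as the paper: both use Theorem~\ref{t1.3} together with Proposition~\ref{p3.5} to see that $(-1)^{n-1}C_{nr}$ and $C_r^{(n)}$ satisfy the same multiplication relation for $\eta_r^{(n)}$, and then conclude via uniqueness, i.e.\ invertibility of the circulant matrix built from the cyclic shifts of $\eta_r^{(n)}$. The only difference is in how that invertibility is justified: you invoke Lemma~\ref{l4.1} to identify the eigenvalues of this circulant with the Gauss sums $G_{nr}^{\ast}(\chi^k)$ (all nonzero), whereas the paper appeals to a result of Baumert--Mills--Ward on the existence of at least two distinct Gaussian periods; your justification is in fact the more transparent one, since nonvanishing of all $f(\zeta_e^j)$ is exactly what is needed.
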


We organize this paper as follows. 
In Section \ref{S2}, 
we review Thaine's results on the $d$-composition of the multiplication 
matrices based on \cite{Tha04}. 
In Section \ref{S3}, 
we study the $d$-compositions of matrices and 
functions in the general situations. 
This enables us to consider the $d$-compositions of 
the multiplication matrices $C_r$ of the Gaussian periods 
and also the $d$-compositions of the Gaussian periods $\eta_r(i)$ 
without linear independence. 
In Section \ref{S4}, 
we recall a Fourier transform on finite abelian groups 
and show that Gaussian periods $\eta_r(i)$ 
and Gauss sums $G_r^*(\chi)$ are each other's finite Fourier transform 
(with some twist). 
Using this, 
we give a proof of Theorem \ref{t1.3}. 
We will give some examples of Theorem \ref{t1.3} in Section \ref{S5}. 
In Section \ref{S6}, the proof of Theorem \ref{t1.4} will be given. 
In Section \ref{S7}, 
we give some examples of Theorem \ref{t1.4} 
for prime degree $e$ with $3\leq e\leq 23$ which also illustrate relations among 
lifts of Jacobi sums, Gaussian periods and multiplication matrices 
of Gaussian periods as in 
Theorem \ref{t1.1}, Theorem \ref{t1.3} and Theorem \ref{t1.4} 
respectively. 

\section{Thaine's results: compositions of multiplication matrices of Gaussian periods}\label{S2}
We review Thaine's results on the $d$-composition of the multiplication matrices 
based on \cite[Section 1 and Section 2]{Tha04}. 

\subsection{Multiplication matrices of roots}\label{SS2.1}
Thaine \cite[Section 1]{Tha04} defined 
the multiplication matrix $A$ of $\theta_0,\ldots,\theta_{e-1}$ as follows. 

Let $D$ be an integrally closed domain with char $D=0$ 
and $K$ be the quotient field of $D$. 
Let $e \geq 2$ be an integer and 
$P(X)=\sum^{e}_{k=0}c_kX^k = \prod^{e-1}_{i=0}(X-\theta_i) \in D[X]$ be a cyclic polynomial, i.e. an irreducible polynomial with cyclic Galois group over $K$. 
Then $K(\theta_i)/K$ is a cyclic extension of degree $e$ 
with $\Gal(K(\theta_i)/K)=\langle\tau\rangle$. 
We may assume that 
$\tau(\theta_i)=\theta_{i+1}$ where we regard the subscripts modulo $e$. 
We also assume that the discriminant 
$\mathrm{disc}_{K(\theta_i)/K}(\theta_0, \ldots \theta_{e-1})
=(\det [\tau^{j}(\theta_i)]_{0\leq i,j \leq e-1})^2 \neq 0$. 
Then $\{\theta_0,\ldots,\theta_{e-1}\}$ is a vector space basis of 
$K(\theta_i)$. 

Let $M_e(K)$ be the algebra of $e\times e$ matrices over $K$. 
Define the matrix $A=[a_{i,j}]_{0\leq i,j \leq e-1}\in M_e(K)$ 
by $\theta_0\theta_i=\sum^{e-1}_{j=0}a_{i,j}\theta_j$
which is called 
{\it the multiplication matrix of $\theta_0,\ldots,\theta_{e-1}$}. 
Note that the $\theta_i$'s are eigenvalues of $A$ and hence 
$P(X)={\rm Char}_X(A)$; the characteristic polynomial of $A$. 

Thaine \cite[Proposition 1]{Tha04} showed that\\
(i) $a_{i,j}=a_{-i,j-i}$ for $0 \leq i,j \leq e-1$;\\
(ii) $A(\mathcal{K}^{-i}A\mathcal{K}^{i})=(\mathcal{K}^{-i}A\mathcal{K}^{i})A$ 
for $0 \leq i \leq e-1$ 
where $\mathcal{K}=[\delta_{i+1,j}]_{0\leq i,j\leq e-1}$ 
is the $e \times e$ circulant matrix 
\begin{align*}
\mathcal{K} =
\left(
\begin{array}{cccc}
 & 1 & &  \\
 & &  \ddots &  \\
 & &  & 1 \\
1 & & & 
\end{array}\right). 
\end{align*}
Conversely, 
if $A^{\prime}=[a^{\prime}_{i,j}]_{0\leq i,j \leq e-1}\in M_e(K)$ 
satisfies {\rm (i)} and {\rm (ii)} and 
$P^\prime(X)={\rm Char}_X(A^\prime)$ is irreducible over $K$ 
with roots $\theta^{\prime}_{0},\ldots,\theta^{\prime}_{e-1}$, 
then the Galois group of $P^\prime(X)$ over $K$ is cyclic 
and  
$A^{\prime}$ is the multiplication matrix of 
$\theta^{\prime}_{0},\ldots,\theta^{\prime}_{e-1}$ 
with ${\rm disc}_{K(\theta_i)/K}(\theta^{\prime}_{0},\ldots,\theta^{\prime}_{e-1}) \neq 0$.

\subsection{The $d$-compositions of multiplication matrices}\label{SS2.2}
Thaine \cite[Section 2]{Tha04} defined 
the $d$-composition $A\overset{d}{\ast}A^\prime$ 
of the multiplication matrices $A$ and $A^\prime$ as follows. 

Let $P(X)=\prod_{i=0}^{e-1}(X-\theta_i), 
P^{\prime}(X)=\prod_{i=0}^{e-1}(X-\theta^{\prime}_i) \in D[X]$ 
be cyclic polynomials of degree $e$. 
Then $L=K(\theta_i)$ and $L^{\prime}=K(\theta^{\prime}_i)$ 
are cyclic extensions of $K$ of degree $e$ with 
$\Gal(L/K)=\langle\tau\rangle$, 
$\tau(\theta_i)=\theta_{i+1}$ 
and 
$\Gal(L^{\prime}/K)=\langle\tau^{\prime}\rangle$, 
$\tau^{\prime}(\theta^{\prime}_i)=\theta^{\prime}_{i+1}$ 
where we regard the subscripts modulo $e$. 

We assume that $L \cap L^{\prime}= K$. 
We also 
assume that 
$\{\theta_0,\ldots,\theta_{e-1}\}$ 
and 
$\{\theta^{\prime}_0,\ldots,\theta^{\prime}_{e-1}\}$ 
are linearly independent over $K$. 
Then $[L L^{\prime}:K]=e^2$ with 
$G={\rm Gal}(L L^{\prime}/K)\simeq \langle \tau \rangle \times \langle \tau^{\prime} \rangle$ 
and $\{\theta_i\theta^{\prime}_j\mid 0\leq i,j\leq e-1\}$ 
becomes a vector space basis of $L L^{\prime}$ over $K$. 
We make identifications $\tau = (\tau,1)$ 
and $\tau^{\prime} = (1, \tau^{\prime})$.  
For $1\leq d\leq e-1$, 
we have an intermediate field 
$L_d=(L L^{\prime})^{\langle \tau {\tau^{\prime}}^{d} \rangle}=K(\theta_{d,i})$ which is a cyclic extension of $K$ of degree $e$ 
with a normal basis $\{\theta_{d,0},\ldots,\theta_{d,e-1}\}$ 
where $\theta_{d,i} = \sum^{e-1}_{s=0}\theta_s \theta^{\prime}_{ds+i}$. 

Thaine \cite[Proposition 6]{Tha04} (see also \cite[Example 4]{Tha04}) 
proved that the multiplication matrix $A_d$ 
of $\theta_{d,0},\ldots,\theta_{d,e-1}$ is given by 
\begin{align*}
A_d=A\overset{d}{\ast}A^{\prime}
\end{align*}
where $A=[a_{i,j}]_{0\leq i,j\leq e-1}$ and 
$A^{\prime}=[a^{\prime}_{i,j}]_{0\leq i,j\leq e-1}$ are 
the multiplication matrices of 
$\theta_{0},\ldots,\theta_{e-1}$ 
and $\theta^{\prime}_{0},\ldots,\theta^{\prime}_{e-1}$ 
respectively 
and the $d$-composition $A\overset{d}{\ast}A^{\prime}$
of the matrices $A$ and $A^{\prime}$ is defined by 
\[
A \overset{d}{\ast} A^{\prime}:= \left[\sum^{e-1}_{s=0}\sum^{e-1}_{t=0}a_{s,t}a^{\prime}_{ds+i,dt+j}\right]_{0\leq i,j\leq e-1}.
\]

\subsection{Multiplication matrices of Gaussian periods}\label{SS2.3}

The simplest example of the multiplication matrix is that of 
Gaussian periods $\eta_1(0),\ldots,\eta_1(e-1)$ for $\F_{p^1}$ 
(see Section \ref{S1}). 

Let $p$ be a prime 
with $p \equiv 1\ ({\rm mod}\ e)$ and $\eta_1(0),\ldots,\eta_1(e-1)$ 
be the Gaussian periods of degree $e$ for $\F_{p^1}$. 
Then $\eta_1(0),\ldots,\eta_1(e-1)$ are linearly independent over $\Q$ 
and $A=[{\rm Cyc}_1(i,j) -D_{i}f]_{0\leq i,j\leq e-1}$ becomes 
the multiplication matrix of $\eta_1(0),\ldots,\eta_1(e-1)$ 
with $P_{e,1}(X)={\rm Char}_X(A)$ (see the equation (\ref{eq1}) in Section \ref{S1}). 

However, in the general case with $r \geq 2$, 
Gaussian periods $\eta_r(0),\ldots,\eta_r(e-1)$ of degree $e$ for $\F_{p^r}$ are not necessarily 
linearly independent over $\Q$. 
Myerson \cite{Mye81} showed that 
$P_{e,r}(X)$ splits over $\Q$ into $\delta=\gcd(e,(p^r-1)/(p-1))$ factors. 
For example, $P_{e,e}(X)$ splits completely over $\Q$, i.e. 
$\eta_e(0),\ldots,\eta_e(e-1) \in \Q$. 

In the next section, 
we study the $d$-compositions of matrices and 
the $d$-compositions of functions for more general situations. 
This enables us to consider the $d$-compositions of 
the multiplication matrices $C_r$ of the Gaussian periods 
$\eta_r(0),\ldots,\eta_r(e-1)$ 
and also the $d$-compositions of the Gaussian periods $\eta_r(i)$ 
without linear independence. 

\section{The $d$-compositions}\label{S3}
\subsection{The $d$-compositions of matrices}\label{SS3.1}

Let $K$ be a field with char $K=0$. 
Let $e\geq 2$ be an integer and 
$M_e(K)$ be the algebra of $e\times e$ matrices over $K$. 

According to Thaine \cite{Tha04} (see Subsection \ref{SS2.2}), 
for $A=[a_{i,j}]_{0\leq i,j\leq e-1}$, 
$B=[b_{i,j}]_{0\leq i,j\leq e-1} \in M_e(K)$ 
and $d\in (\Z/e\Z)\setminus\{0\}$, 
we define {\it the $d$-composition $A \overset{d}{\ast} B$ of $A$ and $B$} as 
\[
A \overset{d}{\ast} B:= \left[\sum^{e-1}_{s=0}\sum^{e-1}_{t=0}a_{s,t}b_{ds+i,dt+j}\right]_{0\leq i,j\leq e-1}.
\]
We see that the $d$-composition $\overset{d}{\ast}$ and 
the ordinary addition $+$ 
satisfy the distributive law, i.e. 
\begin{align*}
A \overset{d}{\ast} (B + C) = (A \overset{d}{\ast}B) + (A \overset{d}{\ast}C),\ 
(A + B) \overset{d}{\ast} C = (A \overset{d}{\ast}C) + (B \overset{d}{\ast}C), 
\end{align*}
although 
it does not satisfy the cancellation law, i.e. there exist matrices $A, B, C$ 
such that $A \overset{d}{\ast} C = B \overset{d}{\ast} C$ and $A \neq B$. 

\begin{prop}\label{p3.1}
Let $A=[a_{i,j}]_{0\leq i,j\leq e-1}$, 
$B=[b_{i,j}]_{0\leq i,j\leq e-1} \in M_e(K)$. 
We write $A[i,j]=a_{i,j}$ for convenience. 
For $d \in (\Z/e\Z)^{\times}$, we have
\[
(B \overset{d}{\ast} A)[i,j] = (A \overset{d^{-1}}{\ast} B)[-d^{-1}i,-d^{-1}j].
\]
In particular, we get
\[
A \overset{-1}{\ast} B = B \overset{-1}{\ast} A.
\]
\end{prop}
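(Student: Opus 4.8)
The plan is to expand both sides directly from the definition of the $d$-composition and to match them by a single change of summation variable. First I would write, straight from the definition and reading all subscripts modulo $e$,
\[
(B \overset{d}{\ast} A)[i,j] = \sum_{s=0}^{e-1}\sum_{t=0}^{e-1} b_{s,t}\, a_{ds+i,\, dt+j}.
\]
Since $d \in (\Z/e\Z)^{\times}$, the map $(s,t) \mapsto (u,v) := (ds+i,\, dt+j)$ is a bijection of $(\Z/e\Z)^2$ onto itself, with inverse given by $s = d^{-1}(u-i)$ and $t = d^{-1}(v-j)$. Performing this substitution turns the sum above into $\sum_{u,v} a_{u,v}\, b_{d^{-1}(u-i),\, d^{-1}(v-j)}$.

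Next I would compare this with the right-hand side. Again by the definition of the $d^{-1}$-composition,
\[
(A \overset{d^{-1}}{\ast} B)[-d^{-1}i,\, -d^{-1}j] = \sum_{u,v} a_{u,v}\, b_{d^{-1}u - d^{-1}i,\, d^{-1}v - d^{-1}j} = \sum_{u,v} a_{u,v}\, b_{d^{-1}(u-i),\, d^{-1}(v-j)},
\]
which is exactly the expression obtained above; this establishes the first identity. For the special case I would then set $d = -1$, so that $d^{-1} = -1$ in $\Z/e\Z$ and hence $-d^{-1}i = i$ and $-d^{-1}j = j$. The first identity then reads $(B \overset{-1}{\ast} A)[i,j] = (A \overset{-1}{\ast} B)[i,j]$ for all $0 \leq i,j \leq e-1$, i.e. $A \overset{-1}{\ast} B = B \overset{-1}{\ast} A$.

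The argument is essentially bookkeeping with subscripts modulo $e$, so I do not expect any real obstacle. The one point genuinely requiring care — and the reason the hypothesis is $d \in (\Z/e\Z)^{\times}$ rather than merely $d \neq 0$ — is that the index substitution $(s,t) \mapsto (ds+i, dt+j)$ must be a bijection of $(\Z/e\Z)^2$, which fails precisely when $d$ is a zero divisor; one should also double-check the sign conventions $-d^{-1}i$, $-d^{-1}j$ are the ones that make the two sides agree (as the computation above confirms).
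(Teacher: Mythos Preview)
Your argument is correct and is essentially identical to the paper's own proof: both start from the definition of $(B \overset{d}{\ast} A)[i,j]$, perform the change of variables $s'=ds+i$, $t'=dt+j$ (your $u,v$), and then recognise the resulting sum as $(A \overset{d^{-1}}{\ast} B)[-d^{-1}i,-d^{-1}j]$. Your explicit remark that the invertibility of $d$ is exactly what makes this substitution a bijection, and your verification of the $d=-1$ special case, are slightly more detailed than the paper but add nothing substantively new.
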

\begin{proof}
Putting $s^{\prime}:=ds+i$ and $t^{\prime}:=dt+j$, we have
\begin{align*}
(B \overset{d}{\ast} A)[i,j]
&= \sum^{e-1}_{s=0}\sum^{e-1}_{t=0}B[s,t]A[ds+i,dt+j]\\
&= \sum^{e-1}_{s^{\prime}=0}\sum^{e-1}_{t^{\prime}=0}
B[d^{-1}(s^{\prime}-i),d^{-1}(t^{\prime}-j)]A[s^{\prime},t^{\prime}]\\
&= \sum^{e-1}_{s^{\prime}=0}\sum^{e-1}_{t^{\prime}=0}A[s^{\prime},t^{\prime}]B[d^{-1}s^{\prime}+(-d^{-1}i),d^{-1}t^{\prime}+(-d^{-1}j)]\\
&=(A \overset{d^{-1}}{\ast} B)[-d^{-1}i,-d^{-1}j].
\end{align*}
\end{proof}
\begin{cor}\label{c3.2}
We have 
${\rm Char}_X(B \overset{d}{\ast} A)={\rm Char}_X(A \overset{d^{-1}}{\ast} B)$ 
where ${\rm Char}_X(A)$ stands for the characteristic polynomial of $A$.
\end{cor}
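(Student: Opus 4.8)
The plan is to derive Corollary \ref{c3.2} directly from Proposition \ref{p3.1} by showing that the matrices $B \overset{d}{\ast} A$ and $A \overset{d^{-1}}{\ast} B$ are conjugate via a permutation matrix, which immediately equates their characteristic polynomials.

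First I would observe that Proposition \ref{p3.1} gives the entrywise identity
\[
(B \overset{d}{\ast} A)[i,j] = (A \overset{d^{-1}}{\ast} B)[-d^{-1}i,-d^{-1}j]
\]
for all $0 \leq i,j \leq e-1$ (indices mod $e$). Since $d \in (\Z/e\Z)^{\times}$, the map $\sigma: i \mapsto -d^{-1}i$ is a bijection of $\Z/e\Z$; let $\Pi = [\delta_{\sigma(i),j}]_{0\leq i,j\leq e-1}$ be the associated permutation matrix. Then the displayed identity says precisely that $B \overset{d}{\ast} A = \Pi \,(A \overset{d^{-1}}{\ast} B)\, \Pi^{-1}$, i.e. the two matrices differ by simultaneously permuting rows and columns by $\sigma$. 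Conjugate matrices have equal characteristic polynomials, so ${\rm Char}_X(B \overset{d}{\ast} A) = {\rm Char}_X(A \overset{d^{-1}}{\ast} B)$.

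There is essentially no obstacle here; the only point requiring a little care is bookkeeping the index arithmetic modulo $e$ — one must check that $\sigma$ is genuinely a permutation (which needs $d$ invertible, exactly the hypothesis $d \in (\Z/e\Z)^{\times}$ rather than merely $d \neq 0$) and that relabelling both the row index and the column index by the same permutation is a conjugation, not just an arbitrary rearrangement of entries. Once that is noted, the characteristic polynomial is invariant and the corollary follows. I would also remark that taking $d = -1$ recovers, at the level of characteristic polynomials, the symmetry $A \overset{-1}{\ast} B = B \overset{-1}{\ast} A$ already recorded in Proposition \ref{p3.1}, so the corollary is a genuine generalization of that special case.
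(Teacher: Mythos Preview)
Your proof is correct and follows the same approach as the paper: both derive from Proposition \ref{p3.1} that $B \overset{d}{\ast} A$ and $A \overset{d^{-1}}{\ast} B$ are conjugate and hence have the same characteristic polynomial. The only difference is that you make the conjugating matrix explicit as the permutation matrix associated to $i\mapsto -d^{-1}i$, whereas the paper simply asserts the existence of an invertible $P$.
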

\begin{proof}
By Proposition \ref{p3.1}, we see that there exists 
an invertible matrix $P \in M_e(K)$ such that
\[
B \overset{d}{\ast}A =P^{-1}(A \overset{d^{-1}}{\ast} B)P.
\]
Hence the assertion follows. 
\end{proof}

\begin{lem}\label{l3.3}
Let $A=[a_{i,j}]_{0\leq i,j\leq e-1}$, 
$B=[b_{i,j}]_{0\leq i,j\leq e-1} \in M_e(K)$. 
Let $\mathcal{K}=[\delta_{i+1,j}]_{0\leq i,j\leq e-1} \in M_e(K)$ 
be a circulant matrix. 
For $0\leq l \leq e-1$, we have\\
{\rm (i)} $\mathcal{K}^{l}(A \overset{d}{\ast} B) = A \overset{d}{\ast} (\mathcal{K}^{l}B) = (\mathcal{K}^{-d^{-1}l}A) \overset{d}{\ast} B$,\\
{\rm (ii)} $(A \overset{d}{\ast} B)\mathcal{K}^{l} = A \overset{d}{\ast} (B\mathcal{K}^{l}) = (A\mathcal{K}^{-d^{-1}l}) \overset{d}{\ast} B$.
\end{lem}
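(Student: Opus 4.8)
The plan is to prove both identities by an elementary change of indices in the defining sum of the $d$-composition, using the fact that left (resp. right) multiplication by $\mathcal{K}^{l}$ acts on a matrix by a cyclic shift of its rows (resp. columns). First I would record that $\mathcal{K}^{l}=[\delta_{i+l,j}]_{0\le i,j\le e-1}$, so that for any $M=[m_{i,j}]\in M_e(K)$ one has $(\mathcal{K}^{l}M)[i,j]=M[i+l,j]$ and $(M\mathcal{K}^{l})[i,j]=M[i,j-l]$, where all indices are read modulo $e$. (Note also that $d^{-1}$ appears in the statement, so $d\in(\Z/e\Z)^{\times}$ is tacitly assumed, which is what makes the substitutions below bijections of $\Z/e\Z$.)

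For {\rm (i)}, I would expand
\[
\mathcal{K}^{l}(A \overset{d}{\ast} B)[i,j] = (A \overset{d}{\ast} B)[i+l,j] = \sum_{s=0}^{e-1}\sum_{t=0}^{e-1} a_{s,t}\,B[ds+i+l,\,dt+j].
\]
The middle term $A \overset{d}{\ast}(\mathcal{K}^{l}B)$ produces the very same sum at once, since $(\mathcal{K}^{l}B)[ds+i,dt+j]=B[ds+i+l,dt+j]$. For the third term I would substitute $s\mapsto s+d^{-1}l$ in the sum defining $(\mathcal{K}^{-d^{-1}l}A)\overset{d}{\ast}B$: using $(\mathcal{K}^{-d^{-1}l}A)[s,t]=a_{s-d^{-1}l,\,t}$ and $d(s+d^{-1}l)=ds+l$, this turns $\sum_{s,t} a_{s-d^{-1}l,\,t}\,B[ds+i,dt+j]$ into $\sum_{s,t} a_{s,t}\,B[ds+l+i,\,dt+j]$, which matches the first expression.

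For {\rm (ii)}, I would carry out the symmetric computation: $(A \overset{d}{\ast} B)\mathcal{K}^{l}[i,j]=(A \overset{d}{\ast} B)[i,j-l]=\sum_{s,t} a_{s,t}\,B[ds+i,\,dt+j-l]$, which coincides with $A \overset{d}{\ast}(B\mathcal{K}^{l})$ on the nose, and with $(A\mathcal{K}^{-d^{-1}l})\overset{d}{\ast}B$ after the substitution $t\mapsto t-d^{-1}l$. Alternatively, one can deduce {\rm (ii)} from {\rm (i)} by transposition: a one-line check gives $(A \overset{d}{\ast} B)^{\mathrm{T}}=A^{\mathrm{T}}\overset{d}{\ast}B^{\mathrm{T}}$ and $(\mathcal{K}^{l})^{\mathrm{T}}=\mathcal{K}^{-l}$, so transposing {\rm (i)} applied to $A^{\mathrm{T}},B^{\mathrm{T}}$ with $l$ replaced by $-l$ yields {\rm (ii)}.

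There is no genuine obstacle here; the only thing requiring care is the bookkeeping of the twist by $d^{-1}$ and the reductions modulo $e$, together with the hypothesis that $d$ is a unit mod $e$, without which only the first two members of each chain of equalities remain valid.
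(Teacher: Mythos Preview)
Your proof is correct and follows essentially the same approach as the paper: record the row/column shift effect of $\mathcal{K}^{l}$, expand the $d$-composition, and reindex by $s\mapsto s+d^{-1}l$ (resp.\ $t\mapsto t-d^{-1}l$). The paper omits {\rm (ii)} as analogous; your transposition remark is a pleasant extra but not needed.
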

\begin{proof}
Write $A[i,j]=a_{i,j}$. 
We first see that
\[
(\mathcal{K}^{l}A)[i,j]=A[i+l,j],\ (A\mathcal{K}^{l})[i,j]=A[i,j-l].
\]
(i) The first equality follows from 
\begin{align*}
(\mathcal{K}^{l}(A \overset{d}{\ast} B))[i,j] &= (A \overset{d}{\ast} B)[i+l,j]\\
&= \sum^{e-1}_{s=0}\sum^{e-1}_{t=0}A[s,t]B[ds+(i+l),dt+j]\\
&=\sum^{e-1}_{s=0}\sum^{e-1}_{t=0}A[s,t](\mathcal{K}^{l}B)[ds+i,dt+j]\\
&=(A \overset{d}{\ast}(\mathcal{K}^{l}B))[i,j].
\end{align*}
By substituting $s^{\prime}:= s+d^{-1}l$, 
the second equality follows from 
\begin{align*}
(A \overset{d}{\ast} (\mathcal{K}^{l}B))[i,j] &= \sum^{e-1}_{s=0}\sum^{e-1}_{t=0}A[s,t]\mathcal{K}^{l}B[ds+i,dt+j]\\
&=\sum^{e-1}_{s^{\prime}=0}\sum^{e-1}_{t=0}A[s^{\prime}-d^{-1}l,t]B[ds^{\prime}+i,dt+j] \\
&=\sum^{e-1}_{s^{\prime}=0}\sum^{e-1}_{t=0}(\mathcal{K}^{-d^{-1}l})A[s^{\prime},t]B[ds^{\prime}+i,dt+j]\\
&=((\mathcal{K}^{-d^{-1}l}A) \overset{d}{\ast} B)[i,j].
\end{align*}
(ii) can be proved in the similar way, and we omit the proof.
\end{proof}

\begin{prop}\label{p3.4}
For $A, B, C \in M_e(K)$ and $d_1, d_2 \in (\Z/e\Z)^{\times}$, we have
\[
A \overset{d_1}{\ast} (B \overset{d_2}{\ast} C) = (A \overset{-d_2^{-1}d_1}{\ast} B) \overset{d_2}{\ast} C.
\]
In particular, we get 
\[
 A \overset{d_1}{\ast} (B \overset{-1}{\ast} C) = (A \overset{d_1}{\ast} B) \overset{-1}{\ast} C
\] 
and hence the $(-1)$-composition 
$\overset{-1}{\ast}$ satisfies the associative law. 
\end{prop}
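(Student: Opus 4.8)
The plan is to verify the identity entry-by-entry, tracking indices carefully through the nested sums. Writing $A[i,j]=a_{i,j}$ as in the paper, I would start from the left-hand side and unfold the two $d$-compositions. By definition,
\[
\bigl(A \overset{d_1}{\ast} (B \overset{d_2}{\ast} C)\bigr)[i,j] = \sum_{s,t} A[s,t]\,(B \overset{d_2}{\ast} C)[d_1 s + i,\, d_1 t + j],
\]
and then expanding the inner composition,
\[
= \sum_{s,t} \sum_{u,v} A[s,t]\, B[u,v]\, C[d_2 u + d_1 s + i,\, d_2 v + d_1 t + j],
\]
where all indices run from $0$ to $e-1$ modulo $e$. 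The goal is to massage this quadruple sum into the form $\bigl((A \overset{-d_2^{-1}d_1}{\ast} B) \overset{d_2}{\ast} C\bigr)[i,j]$, which by definition equals
\[
\sum_{u,v} (A \overset{-d_2^{-1}d_1}{\ast} B)[d_2 u + i,\, d_2 v + j]\, C[d_2 u + i,\, d_2 v + j]
\]
wait — more precisely $\sum_{u,v}(A \overset{-d_2^{-1}d_1}{\ast}B)[u,v]\,C[d_2 u + i, d_2 v + j]$, and expanding that inner composition with composition-index $d := -d_2^{-1}d_1$ gives $\sum_{u,v}\sum_{s,t} A[s,t]\,B[ds+u,\,dt+v]\,C[d_2 u + i,\,d_2 v + j]$.

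So the heart of the matter is to match the two quadruple sums. In the first expression the $B$-arguments are $(u,v)$ and the $C$-arguments are $(d_2 u + d_1 s + i,\, d_2 v + d_1 t + j)$; in the second the $B$-arguments are $(ds+u, dt+v)$ and the $C$-arguments are $(d_2 u + i, d_2 v + j)$. The natural move is a change of summation variable in the first sum: replace $u$ by $u' := u - d_2^{-1}d_1 s = u + ds$ (using $d = -d_2^{-1}d_1$), and similarly $v$ by $v' := v + dt$. This is a bijection of $\Z/e\Z$ for each fixed $s,t$ since $d_2$, hence $d_2^{-1}$, is a unit. Under this substitution $B[u,v]$ becomes $B[u'-ds, v'-dt]$, and I should check the $C$-argument transforms correctly: $d_2 u + d_1 s + i = d_2(u' - ds) + d_1 s + i = d_2 u' - d_2 d s + d_1 s + i = d_2 u' + (-d_1 + d_1)s + i = d_2 u' + i$, using $d_2 d = -d_1$. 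Good — the $d_1 s$ and $d_1 t$ cross terms cancel exactly, and the $C$-arguments become $(d_2 u' + i,\, d_2 v' + j)$, matching the second expression. Relabeling $u', v'$ back to $u, v$ and reindexing $B$'s row/column arguments to $(ds + u, dt + v)$ via one more trivial shift (or just observing the expression is already in the desired shape after the substitution) completes the identification.

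The special case is immediate: taking $d_1 = -1$ in the general identity gives $A \overset{-1}{\ast}(B \overset{d_2}{\ast} C) = (A \overset{-d_2^{-1}}{\ast} B) \overset{d_2}{\ast} C$, which isn't quite what's claimed; rather I should take $d_2 = -1$, giving $-d_2^{-1}d_1 = d_1$, so $A \overset{d_1}{\ast}(B \overset{-1}{\ast} C) = (A \overset{d_1}{\ast} B) \overset{-1}{\ast} C$. Combining this (with $d_1 = -1$) with the commutativity $A \overset{-1}{\ast} B = B \overset{-1}{\ast} A$ from Proposition \ref{p3.1}, associativity of $\overset{-1}{\ast}$ follows: $A \overset{-1}{\ast}(B \overset{-1}{\ast} C) = (A \overset{-1}{\ast} B)\overset{-1}{\ast} C$. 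I don't anticipate a genuine obstacle here — the only place that needs care is getting every sign and inverse right in the index substitution, in particular confirming that $d_2 d = -d_1$ is exactly the relation that kills the unwanted cross terms, and confirming the substitution is a bijection (which needs $d_2 \in (\Z/e\Z)^\times$, as hypothesized). It is worth double-checking the edge convention that all index arithmetic is modulo $e$ so that the shifted sums still range over a complete residue system.
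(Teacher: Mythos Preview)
Your strategy --- unfold both sides and match them by a change of summation variable --- is sound, but the execution has a sign slip that breaks the argument. You set $u' := u + ds$ (with $d=-d_2^{-1}d_1$), so $u = u' - ds$, and then compute
\[
d_2 u + d_1 s + i = d_2 u' - d_2 d\, s + d_1 s + i.
\]
Since $d_2 d = -d_1$ (which you state correctly), one has $-d_2 d\, s = +d_1 s$, so the two terms \emph{add} to $2d_1 s$ rather than cancel; your ``$(-d_1+d_1)s$'' is not right. Moreover, after your substitution the $B$-argument becomes $B[u'-ds,\,v'-dt]$, whereas the target right-hand side has $B[ds+u',\,dt+v']$; these differ by $2ds$ in the row index and cannot be reconciled by a ``trivial shift'' without disturbing the $A[s,t]$ factor. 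The fix is simply to reverse the sign of the shift: take $u' := u - ds$ (equivalently $u' = u + d_2^{-1}d_1 s$) and $v' := v - dt$. Then $u = u' + ds$ gives $B[u,v]=B[ds+u',\,dt+v']$ directly, and
\[
d_2 u + d_1 s + i = d_2 u' + d_2 d\, s + d_1 s + i = d_2 u' + (-d_1 + d_1)s + i = d_2 u' + i,
\]
so both the $B$- and $C$-arguments match in one stroke. The deduction of the special case $d_2=-1$ and of associativity is fine (commutativity from Proposition~\ref{p3.1} is not actually needed: setting $d_1=d_2=-1$ already gives associativity).

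For comparison, the paper proves this proposition by a more structural route: it first records the identity $A\overset{d}{\ast}B = \sum_{s,t}A[s,t]\,\mathcal{K}^{ds}B\mathcal{K}^{-dt}$ in terms of the circulant shift matrix $\mathcal{K}$, and then applies Lemma~\ref{l3.3} (which says $\mathcal{K}^{l}(A\overset{d}{\ast}B)=(\mathcal{K}^{-d^{-1}l}A)\overset{d}{\ast}B$, etc.) to push the outer $\mathcal{K}$-powers through the inner composition. Your direct index-chasing is equivalent in content and arguably more elementary; the paper's approach has the advantage of packaging the index shifts once and for all in Lemma~\ref{l3.3}, which keeps the signs under control.
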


\begin{proof}
For $A=[a_{i,j}]_{0\leq i,j\leq e-1}\in M_e(K)$, 
we write $A[i,j]=a_{i,j}$. 
We obtain the following expression of 
$A \overset{d}{\ast} B$ by using the circulant matrix 
$\mathcal{K}=[\delta_{i+1,j}]_{0\leq i,j\leq e-1} \in M_e(K)$: 
\[
A \overset{d}{\ast} B = \sum^{e-1}_{s=0}\sum^{e-1}_{t=0}A[s,t]\,\mathcal{K}^{ds}B\mathcal{K}^{-dt}.
\]
Then it follows from Lemma \ref{l3.3} that 
\begin{align*}
A\overset{d_1}{\ast}(B \overset{d_2}{\ast} C)
&=-\sum^{e-1}_{s=0}\sum^{e-1}_{t=0}A[s,t]\,
\mathcal{K}^{d_1s}(B\overset{d_2}{\ast}C)\mathcal{K}^{-d_1t}\\
&=\sum^{e-1}_{s=0}\sum^{e-1}_{t=0} A[s,t]\left((\mathcal{K}^{-d_2^{-1}d_1s}B\mathcal{K}^{d_2^{-1}d_1t})\overset{d_2}{\ast}C\right)\\
&=\sum^{e-1}_{s=0}\sum^{e-1}_{t=0} A[s,t]
\left(  \sum^{e-1}_{u=0}\sum^{e-1}_{v=0}\mathcal{K}^{-d_2^{-1}d_1s}B\mathcal{K}^{d_2^{-1}d_1t}[u,v]\,\mathcal{K}^{d_2u}C\mathcal{K}^{d_2v}\right)\\
&=\sum^{e-1}_{u=0}\sum^{e-1}_{v=0} \left(\sum^{e-1}_{s=0}\sum^{e-1}_{t=0} A[s,t] \mathcal{K}^{-d_2^{-1}d_1s}B\mathcal{K}^{d_2^{-1}d_1t}\right)\![u,v]\,\mathcal{K}^{d_2u}C\mathcal{K}^{d_2v}\\
&=\sum^{e-1}_{u=0}\sum^{e-1}_{v=0}(A\overset{-d_2^{-1}d_1}{\ast}B)[u,v]\,\mathcal{K}^{d_2u}C\mathcal{K}^{d_2v}\\
&= (A \overset{-d_2^{-1}d_1}{\ast} B) \overset{d_2}{\ast} C.
\end{align*}
The last assertion follows if we take $d_2=-1$. 
\end{proof}

By the last statement of Proposition \ref{p3.4}, 
we can define the $n$-fold product $A^{(n)}$ as 
\begin{align*}
A^{(n)} := A \overset{-1}{\ast} \cdots \overset{-1}{\ast} A\quad(\text{$n$-fold}).
\end{align*}

\subsection{The $d$-compositions of functions}\label{SS3.2}
Let $e \geq 2$ be an integer and $P(X)=\sum^{e-1}_{k=0}c_kX^k = \prod^{e-1}_{i=0}(X-\theta_i) \in D[X]$ be a cyclic polynomial. 
Let $L^2(\Z/e\Z)$ be the vector space of all $\C$-valued functions 
on $\Z/e\Z$. 
We may regard $\theta_i=\theta(i)$ as 
the function $\theta:\Z/e\Z\to \mathbb{C}$, i.e. $\theta\in L^2(\Z/e\Z)$. 
Based on the results in 
Subsection \ref{SS3.1}, 
for $f,g \in L^2(\Z/e\Z)$ and $d\in (\Z/e\Z)\setminus\{0\}$, 
we define {\it the $d$-composition $f \overset{d}{\ast} g$ of $f$ and $g$} by
\[
(f \overset{d}{\ast} g)(i) := \sum^{e-1}_{s=0}f(s)g(ds+i).
\]
In particular, we get
\[
(f \overset{-1}{\ast} g)(i) = \sum_{k_1+k_2\equiv\,i\,({\rm mod} e)\atop 0\leq k_1,k_2\leq e-1}f(k_1)g(k_2)=(f*g)(i)
\]
where $*$ is the (usual) convolution 
of $L^2(\Z/e\Z)$ (see also Section \ref{S4}). 
Hence the $(-1)$-composition $\overset{-1}{\ast}$ 
satisfies the commutative and the associative laws, 
and we can define the $n$-fold product of $f$ 
with respect to $\overset{-1}{\ast}$ as 
\[
f^{\left( n \right)}(i):= (f\overset{-1}{\ast} \cdots \overset{-1}{\ast}f)(i) \quad(\text{$n$-fold}).
\]
By the definition, we get 
\[
f^{(n)}(i)=\sum_{k_1+\cdots+k_n\equiv\,i\,({\rm mod} e)\atop 0\leq k_1,\ldots,k_n\leq e-1}f(k_1)\cdots f(k_n).
\]

In order to prove Theorem \ref{t1.4}, 
we need the following proposition which gives the relation 
between $A \overset{d}{\ast} B$ and $f \overset{d}{\ast}g$:
\begin{prop}\label{p3.5}
Let $K$ be a field with ${\rm char}$ $K=0$. 
For $f, g\in L^2(\Z/e\Z)$, 
we assume that there exist 
$A=[a_{i,j}]_{0\leq i,j \leq e-1}$, 
$B=[b_{i,j}]_{0\leq i,j \leq e-1}\in M_e(K)$ such that 
\[
f(i)\,f(j)=\sum^{e-1}_{k=0}a_{j-i,k-i}\,f(k)\ {\rm and}\ 
g(i)\,g(j)=\sum^{e-1}_{k=0}b_{j-i,k-i}\,g(k).
\]
Then we have 
\[
(f \overset{d}{\ast} g)(i)\,(f \overset{d}{\ast} g)(j)
=\sum^{e-1}_{k=0}(A \overset{d}{\ast} B)[j-i,k-i]\,(f \overset{d}{\ast} g)(k)
\]
where we write $A[i,j]=a_{i,j}$. 
In particular, 
\[
f^{(n)}(i)\,f^{(n)}(j)=\sum^{e-1}_{j=0} A^{(n)}[j-i,k-i]\,f^{(n)}(j)
\]
where $A^{(n)}$ is the $n$-fold product of $A$ 
with respect to the $(-1)$-composition $\overset{-1}{\ast}$. 
\end{prop}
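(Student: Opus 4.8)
The plan is to reduce the statement about the $d$-composition $A\overset{d}{\ast}B$ to a purely formal manipulation using the expression $A\overset{d}{\ast}B=\sum_{s,t}A[s,t]\,\mathcal{K}^{ds}B\mathcal{K}^{-dt}$ already exploited in Proposition~\ref{p3.4}, together with the hypothesis that $f,g$ have multiplication matrices $A,B$ in the slightly shifted indexing $f(i)f(j)=\sum_k a_{j-i,k-i}f(k)$. First I would rewrite $(f\overset{d}{\ast}g)(i)(f\overset{d}{\ast}g)(j)$ by expanding both factors via the definition of $\overset{d}{\ast}$: this produces a quadruple sum $\sum_{s,s',t,t'}f(s)f(s')g(ds+i)g(ds'+j)$ over all indices. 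The key step is to apply the multiplication rules for $f$ and for $g$ to the pairs $f(s)f(s')$ and $g(ds+i)g(ds'+j)$, turning the quadruple sum into a sum over a single product $f(k)g(\ell)$ weighted by $a_{s'-s,\,k-s}\,b_{d(s'-s),\,\ell-(ds+i)}$; I then want to recognize, after collecting terms and reindexing, that the coefficient of $(f\overset{d}{\ast}g)(k)$ is exactly $(A\overset{d}{\ast}B)[j-i,k-i]$.

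Concretely, after applying the two multiplication rules the inner structure involves $b$ evaluated at arguments that differ by $d$ times the corresponding $a$-arguments; this is precisely the feature that makes the $d$-composition formula $\bigl[\sum_{s,t}a_{s,t}b_{ds+i,dt+j}\bigr]$ emerge. So the heart of the argument is a careful change of variables: set $u=s'-s$ (the ``shift'' seen by $A$) and track how the $g$-indices $ds+i$ and $ds'+j=ds+i+du+(j-i)$ feed into $b$, so that summing over $s$ (equivalently over the base point) reconstitutes $(f\overset{d}{\ast}g)$ evaluated at the appropriate index while summing over $u$ and the remaining free index reconstitutes the entry of $A\overset{d}{\ast}B$. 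I would present this as one displayed chain of equalities, being scrupulous about the modular reductions of subscripts and about which variable is summed at each stage.

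The ``in particular'' clause then follows immediately by induction on $n$: for $n=1$ it is the hypothesis; for the inductive step I would write $f^{(n)}=f^{(n-1)}\overset{-1}{\ast}f$, note by the inductive hypothesis that $f^{(n-1)}$ has multiplication matrix $A^{(n-1)}$ (in the shifted indexing), and apply the already-proved main identity with $d=-1$, $f\leadsto f^{(n-1)}$, $g\leadsto f$, $A\leadsto A^{(n-1)}$, $B\leadsto A$; the associativity of $\overset{-1}{\ast}$ from Proposition~\ref{p3.4} identifies $A^{(n-1)}\overset{-1}{\ast}A$ with $A^{(n)}$ and likewise on the function side, closing the induction. (I note the typo in the displayed ``in particular'' line — the summation index should be $k$, not $j$, matching $f^{(n)}(k)$ — and would state it with $k$.)

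I expect the main obstacle to be purely bookkeeping: getting the three simultaneous reindexings (the variable absorbed into $A$'s first subscript, the variable absorbed into $B$'s subscripts, and the variable that becomes the base point of the outer $\overset{d}{\ast}$) to line up so that the modular shifts by $d$ land in exactly the pattern $b_{ds+i,dt+j}$. A secondary subtlety is that the hypothesis is stated with the ``diagonal-shifted'' indexing $a_{j-i,k-i}$ rather than plain $a_{j,k}$, so I must check that this shifted form is compatible with the definition of $A\overset{d}{\ast}B$ — that is, that if $A,B$ are the shifted multiplication matrices of $f,g$ then $A\overset{d}{\ast}B$ is the shifted multiplication matrix of $f\overset{d}{\ast}g$, with the same shift convention throughout; once that compatibility is verified the computation is essentially forced.
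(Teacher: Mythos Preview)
Your proposal is correct and follows essentially the same route as the paper: expand $(f\overset{d}{\ast}g)(i)(f\overset{d}{\ast}g)(j)$ as a double sum, apply the multiplication hypotheses for $f$ and for $g$ to the resulting pairs, then perform the changes of variable (the paper uses $s:=n-m$, $t:=u-m$, $k:=v-du$, which is exactly your ``shift'' $u=s'-s$ together with the base-point reindexing) to identify the coefficient of $(f\overset{d}{\ast}g)(k)$ with $(A\overset{d}{\ast}B)[j-i,k-i]$. The paper presents this as a single displayed chain of equalities and leaves the ``in particular'' unargued; your inductive justification of that clause, and your observation about the $j$/$k$ typo, are both on point. (Your opening remark about the circulant-matrix formula from Proposition~\ref{p3.4} is not actually needed or used in the computation you go on to describe.)
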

\begin{proof}
By substituting $k:=v-du$, $s:=n-m$, $t:=u-m$, we have
\begin{align*}
&(f \overset{d}{\ast} g)(i)(f \overset{d}{\ast} g)(j)\\
&=\left( \sum_{m=0}^{e-1}f(m)g(dm+i) \right)\left( \sum_{n=0}^{e-1}f(n)g(dn+j)\right) \\
&=\sum_{m,n=0}^{e-1}(f(m)f(n))(g(dm+i)g(dn+j)) \\
&=\sum_{m,n=0}^{e-1}\left( \sum_{u=0}^{e-1} A[n-m,u-m]f(u)\right) \left( \sum_{v=0}^{e-1} B[d(n-m)+j-i,v-(dm+i)]g(v)\right)\\
&=\sum_{m,n,u,k=0}^{e-1}A[n-m,u-m]B[d(n-m)+j-i,d(u-n)+k-i]f(u)g(du+k)\\
&=\sum_{s,t,u,k=0}^{e-1}A[s,t]B[ds+j-i,dt+k-i]f(u)g(du+k) \\
&= \sum_{k=0}^{e-1}\left( \sum_{s,t=0}^{e-1}A[s,t]B[ds+(j-i),dt+(k-i)]\right)\left( \sum_{u=0}^{e-1} f(u)g(du+k) \right) \\
&=\sum_{k=0}^{e-1}(A\overset{d}{\ast}B)[j-i,k-i](f\overset{d}{\ast}g)(k). 
\end{align*}
\end{proof}

Applying Proposition \ref{p3.5} 
for Gaussian periods $\eta_r(i)$ of degree $e$ for $\F_{p^r}$ 
and $\eta^\prime_s(i)$ for $\F_{q^s}$ 
with $p^r, q^s\equiv 1\ ({\rm mod}\ e)$ 
(we may apply the both cases $p\neq q$ and $p=q$)
and their multiplication matrices, we get: 

\begin{cor}
Let $e \geq 2$ be an integer and 
$p^{r}$ $($resp. $q^{s}$$)$ be a prime power with 
$p^r\equiv 1\ ({\rm mod}\ e)$ $($resp. $q^s\equiv 1\ ({\rm mod}\ e)$$)$. 
$($We may take $p^r$, $q^s$ in the both cases $p\neq q$ and $p=q$$.)$ 
We regard the Gaussian periods $\eta_r(i)$ of degree $e$ 
for $\F_{p^{r}}$ as the functions from $\Z/e\Z$ to $\C$, 
i.e. $\eta_r \in L^2(\Z/e\Z)$. 
Let $C$ $($resp. $C^\prime$$)$ 
be the multiplication matrix of the Gaussian periods 
$\eta_r(0),\ldots,\eta_r(e-1)$ of degree $e$ 
for $\F_{p^r}$ 
$($resp. 
$\eta^\prime_s(0),\ldots,\eta^\prime_s(e-1)$ of degree $e$ 
for $\F_{q^s}$$)$. 
Then we have
\[
(\eta_{r}\overset{d}{\ast} \eta^\prime_{s})(i)\, 
(\eta_{r}\overset{d}{\ast} \eta^\prime_{s})(j)
=\sum^{e-1}_{k=0}(C\overset{d}{\ast}C^\prime)[j-i,k-i]\, 
(\eta_{r}\overset{d}{\ast}\eta^\prime_{s})(k).
\]
\end{cor}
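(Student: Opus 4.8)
The plan is to observe that this corollary is simply an application of Proposition \ref{p3.5}, so the only real task is to verify that its hypothesis is met for the Gaussian periods $\eta_r$ and $\eta^\prime_s$ viewed as functions in $L^2(\Z/e\Z)$, with the multiplication matrices $C$ and $C^\prime$ playing the roles of $A$ and $B$.

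First I would recall from the equation \eqref{eq1} in Section \ref{S1} that for all $m,i$,
\[
\eta_r(m)\,\eta_r(m+i)=\sum_{j=0}^{e-1}\bigl({\rm Cyc}_r(i,j)-D_i f\bigr)\eta_r(m+j)
=\sum_{j=0}^{e-1}C[i,j]\,\eta_r(m+j),
\]
where $C=[{\rm Cyc}_r(i,j)-D_i f]_{0\le i,j\le e-1}$. Setting $m=\alpha$ and $m+i=\beta$ (so $i=\beta-\alpha$) and reindexing the sum by $k=m+j=\alpha+j$ (so $j=k-\alpha$), this becomes exactly
\[
\eta_r(\alpha)\,\eta_r(\beta)=\sum_{k=0}^{e-1}C[\beta-\alpha,k-\alpha]\,\eta_r(k),
\]
which is precisely the hypothesis of Proposition \ref{p3.5} for $f=\eta_r$ with matrix $A=C$. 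The identical argument with $\eta^\prime_s$ and $C^\prime$ gives the hypothesis for $g=\eta^\prime_s$ with matrix $B=C^\prime$. Here it is worth emphasizing (as the surrounding text does) that we do \emph{not} need the $\eta_r(i)$ to be linearly independent over $\Q$: the relation \eqref{eq1} holds as an identity of complex numbers regardless, and Proposition \ref{p3.5} is stated purely in terms of such identities, so it applies verbatim. The same remark covers the case $p=q$ (or $r=s$), since nothing in the derivation uses that the two fields or the two systems of periods are distinct.

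Having checked the hypothesis, I would then invoke Proposition \ref{p3.5} directly: its conclusion reads
\[
(f\overset{d}{\ast}g)(i)\,(f\overset{d}{\ast}g)(j)=\sum_{k=0}^{e-1}(A\overset{d}{\ast}B)[j-i,k-i]\,(f\overset{d}{\ast}g)(k),
\]
and substituting $f=\eta_r$, $g=\eta^\prime_s$, $A=C$, $B=C^\prime$ yields exactly the displayed identity of the corollary. There is essentially no obstacle here; the proof is a two-line matter of matching notation, and the only point requiring a word of care is the reindexing that converts the form of \eqref{eq1} (indexed by the "offset" $m$) into the fixed-index form $\eta_r(i)\eta_r(j)=\sum_k A[j-i,k-i]\eta_r(k)$ demanded by Proposition \ref{p3.5} — a step that goes through cleanly because all subscripts are taken modulo $e$ and \eqref{eq1} is translation-invariant in $m$.
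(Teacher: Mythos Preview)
Your proposal is correct and matches the paper's own treatment: the paper simply states that the corollary follows by applying Proposition~\ref{p3.5} to the Gaussian periods and their multiplication matrices, without spelling out any details. Your write-up supplies exactly the hypothesis check (via equation~\eqref{eq1} and the reindexing) that the paper leaves implicit, so it is, if anything, a slightly more explicit version of the same argument.
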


For the case $p\neq q$, we can find examples of 
$C\overset{d}{\ast}C^\prime$ in Thaine \cite[Example 4, page 259]{Tha04}. 
We will treat the case $p=q$ in the remaining part of this paper. 

\section{Proof of Theorem \ref{t1.3}}\label{S4} 
We recall a Fourier transform on finite abelian groups 
(see Terras \cite[Chapter 10]{Ter99}). 
Let $G$ be a finite abelian group and 
\[
L^2(G) = \{f : G \to \C\}
\]
be the vector space of all $\C$-valued functions on $G$ with 
the inner product 
$\langle f,g\rangle=\sum_{x\in G}f(x)\overline{g(x)}$. 
Let $\widehat{G}={\rm Hom}(G,\C^{\times})$ be the dual 
of $G$. 
For $f\in L^2(G)$, 
{\it the finite Fourier transform $\mathscr{F}(f)=\widehat{f}\in L^2(\widehat{G})$ of $f$} 
is defined to be 
\begin{align*}
(\mathscr{F}(f))(\chi) = \widehat{f}(\chi) =\sum_{x \in G}f(x)\overline{\chi(x)}=\langle f,\chi\rangle. 
\end{align*} 
Then $\mathscr{F}:L^2(G)\rightarrow L^2(\widehat{G})$ becomes a 
bijective linear transformation with the inverse 
\begin{align*}
(\mathscr{F}^{-1}(\widehat{f}))(x) = f(x) 
= \frac{1}{\# G}\sum_{\chi \in \widehat{G}}\widehat{f}(\chi)\chi(x)
=\frac{1}{\# G}\sum_{\chi \in \widehat{G}}\langle f,\chi\rangle\chi(x).
\end{align*}
For $f, g\in L^2(G)$, we define {\it the convolution $f \ast g\in L^2(G)$ 
of $f$ and $g$} by
\[
(f \ast g)(x) =\sum_{y \in G} f(y)g(x-y).
\]
Then the space $L^2(G)$ with the convolution $\ast$
is isomorphic to the group ring $\C[G]$ (with the usual convolution product) 
as a commutative $\C$-algebra by 
$L^2(G)\ni f\mapsto \sum_{x\in G}f(x)x\in \C[G]$. 
We also have the compatibility of the convolution $\ast$ and 
the finite Fourier transform $\mathscr{F}(f)=\widehat{f}$: 
\begin{align}
\widehat{(f\ast g)}(\chi)=\widehat{f}(\chi)\widehat{g}(\chi)\label{eq4}
\end{align}
(see Terras \cite[page 168, Theorem 2]{Ter99}). 

In order to show Theorem \ref{t1.3}, 
we prepare the following fundamental lemma: 
\begin{lem}\label{l4.1}
Let $e \geq 2$ be an 
integer and $p^r$ be a prime power 
with $p^r\equiv 1\ ({\rm mod}\ e)$. 
Let $\gamma$ be a fixed generator of $\F_{p^r}^{\times}$ and $\chi$ be the character on $\F_{p^r}$ 
with $\chi(\gamma)=\zeta_e$ and $\chi(0)=0$. 
We regard the Gaussian periods $\eta_r(i)$ of degree $e$ 
for $\F_{p^{r}}$ as the functions from $\Z/e\Z$ to $\C$, 
i.e. $\eta_r \in L^2(\Z/e\Z)$, and 
the Gauss sum $G_r^{\ast}(\chi^j)$ for $\F_{p^{r}}$
as the functions from 
$\widehat{\Z/e\Z}$ to $\C$, i.e. $G_r^{\ast}\in L^2(\widehat{\Z/e\Z})$. 
Then the finite Fourier transform $\mathscr{F}(\eta_r)$ of $\eta_r$ 
is given by 
\begin{align*}
(\mathscr{F}(\eta_r))(\chi^j)=G_{r}^{\ast}(\chi^{-j})
\end{align*}
and we also have
\begin{align*}
(\mathscr{F}^{-1}(G_{r}^{\ast}))(i)=\eta_{r}(-i).
\end{align*}
\end{lem}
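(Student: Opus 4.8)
The plan is to reduce both assertions to a single explicit identity expressing each Gauss sum $G_r^{\ast}(\chi^m)$ as a $\zeta_e$-weighted sum of the Gaussian periods $\eta_r(0),\ldots,\eta_r(e-1)$, and then to substitute this identity into the definitions of $\mathscr{F}$ and $\mathscr{F}^{-1}$ recalled above.

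First I would establish the key identity
\[
G_r^{\ast}(\chi^m)=\sum_{i=0}^{e-1}\zeta_e^{\,mi}\,\eta_r(i)\qquad(m\in\Z/e\Z).
\]
To see this, write each $\alpha\in\F_{p^r}^{\times}$ uniquely as $\gamma^{k}$ with $0\leq k\leq p^r-2$, and split the exponent as $k=ej+i$ with $0\leq j\leq f-1$ and $0\leq i\leq e-1$ (using $p^r-1=ef$). Since $\chi^m(\gamma^{ej+i})=\zeta_e^{m(ej+i)}=\zeta_e^{mi}$, the defining sum $\sum_{\alpha\in\F_{p^r}^{\times}}\chi^m(\alpha)\zeta_p^{\mathrm{Tr}(\alpha)}$ regroups exactly as $\sum_{i=0}^{e-1}\zeta_e^{mi}\sum_{j=0}^{f-1}\zeta_p^{\mathrm{Tr}(\gamma^{ej+i})}=\sum_{i=0}^{e-1}\zeta_e^{mi}\eta_r(i)$. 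I would also record the identification that turns $G_r^{\ast}$ into a function on $\widehat{\Z/e\Z}$: the character $\chi^j$ of $\F_{p^r}^{\times}$ factors through the surjection $\F_{p^r}^{\times}\to\Z/e\Z$, $\gamma^k\mapsto k\bmod e$, hence corresponds to the character $i\mapsto\zeta_e^{ji}$ of $\Z/e\Z$; in particular $\overline{\chi^j(i)}=\zeta_e^{-ji}$.

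With this in hand, the first assertion is immediate from the definition of the finite Fourier transform:
\[
(\mathscr{F}(\eta_r))(\chi^j)=\sum_{i=0}^{e-1}\eta_r(i)\,\overline{\chi^j(i)}=\sum_{i=0}^{e-1}\zeta_e^{(-j)i}\,\eta_r(i)=G_r^{\ast}(\chi^{-j}),
\]
the last step being the key identity with $m=-j$. For the second assertion I would introduce $\widetilde{\eta}_r\in L^2(\Z/e\Z)$ by $\widetilde{\eta}_r(i)=\eta_r(-i)$; the computation just performed gives $\mathscr{F}(\widetilde{\eta}_r)(\chi^j)=\sum_{i}\eta_r(-i)\zeta_e^{-ji}=\sum_{i}\eta_r(i)\zeta_e^{ji}=G_r^{\ast}(\chi^{j})$, so applying $\mathscr{F}^{-1}$ yields $(\mathscr{F}^{-1}(G_r^{\ast}))(i)=\widetilde{\eta}_r(i)=\eta_r(-i)$. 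Alternatively one can verify this head-on by plugging the key identity into $\frac{1}{e}\sum_{j=0}^{e-1}G_r^{\ast}(\chi^j)\zeta_e^{ji}$ and evaluating the geometric sum $\sum_{j=0}^{e-1}\zeta_e^{j(k+i)}=e\,\delta_{k,-i}$.

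There is no genuine obstacle here; the argument is pure bookkeeping. The only points requiring care are (a) fixing the character identification $\chi^j\leftrightarrow(i\mapsto\zeta_e^{ji})$ correctly so that the complex conjugate in the definition of $\mathscr{F}$ produces $\chi^{-j}$ rather than $\chi^{j}$ (this is the ``twist''), and (b) confirming that grouping $\F_{p^r}^{\times}$ by the residue of the discrete logarithm modulo $e$ is precisely the partition underlying the definition of $\eta_r(i)$. Both become transparent once the identifications are written down explicitly.
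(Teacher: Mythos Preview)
Your proof is correct and follows essentially the same route as the paper: both reduce the lemma to the identity $G_r^{\ast}(\chi^m)=\sum_{i=0}^{e-1}\zeta_e^{mi}\eta_r(i)$ and then substitute it into the definitions of $\mathscr{F}$ and $\mathscr{F}^{-1}$. The only difference is that the paper quotes this identity as well known (citing \cite[Proposition 1 (f)]{Mye81}), whereas you derive it directly by splitting $\F_{p^r}^{\times}$ according to the residue of the discrete logarithm modulo $e$.
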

\begin{proof}
Because $\F_{p^r}^\times$ is a cyclic group of order $p^r-1=ef$
and $\chi\in\widehat{\F_{p^r}^\times}$ is of order $e$,
the Gauss sum $G_r^\ast(\chi^j)$
can be regarded as
the function from $\widehat{\Z/e\Z}$ to $\C$ via factors through
$G_r^\ast:\widehat{\F_{p^r}^\times}\rightarrow
\widehat{\F_{p^r}^\times}/H\rightarrow\C$
where $H$ is the group of $e$-th powers of $\widehat{\F_{p^r}^{\times}}$
with $\widehat{\F_{p^r}^\times}/H\simeq \widehat{\Z/e\Z}$.
Note that $\{\chi^j H\mid 0\leq j\leq e-1\}$ gives
a set of complete representatives for
$\widehat{\F_{p^r}^\times}/H\simeq \widehat{\Z/e\Z}$.

We have the following 
well-known relations between the Gauss sums $G_r^\ast(\chi^j)$ 
and the Gaussian periods $\eta_r(i)$: 
\begin{align*}
G_{r}^{\ast}(\chi^j)=\sum_{i=0}^{e-1}\zeta_e^{ij}\eta_r(i),\quad 
\eta_r(i)=\frac{1}{e}\sum_{j=0}^{e-1}\zeta_e^{-ij}G_{r}^{\ast}(\chi^j)
\end{align*}
(see \cite[Proposition 1 (f)]{Mye81}). 
Then it follows that 
\begin{align*}
(\mathscr{F}(\eta_r))(\chi^j)
=\sum_{x \in \Z/e\Z}\eta_r(x)\overline{\chi^j}(x)
=\sum_{i=0}^{e-1}\zeta_e^{-ij}\eta_r(i)
=G_{r}^{\ast}(\chi^{-j})
\end{align*}
and 
\begin{align*}
(\mathscr{F}^{-1}(G_{r}^{\ast}))(i)
&=\frac{1}{\#(\Z/e\Z)}\sum_{\psi \in \widehat{\Z/e\Z}}G_{r}^{\ast}(\psi)\psi(i)\\
&=\frac{1}{e}\sum_{j=0}^{e-1} G_{r}^{\ast}(\chi^j)\chi^j(i)
=\frac{1}{e}\sum_{j=0}^{e-1} \zeta_e^{ij} G_{r}^{\ast}(\chi^j)
=\eta_{r}(-i).
\end{align*}
\end{proof}

{\it Proof of Theorem \ref{t1.3}}. 
By Theorem \ref{t1.1}, 
the equation (\ref{eq4}) and Lemma \ref{l4.1}, we have 
\begin{align*}
\widehat{\eta_r^{(n)}}(\chi^i)=(\widehat{\eta_r}(\chi^i))^n
=(G_r^*(\chi^{-i}))^n=(-1)^{n-1}G^*_{nr}(\chi^{-i})
=(-1)^{n-1}\widehat{\eta_{nr}}(\chi^i).
\end{align*}
We get 
$\widehat{\eta_r^{(n)}}=(-1)^{n-1}\widehat{\eta_{nr}}$ 
and hence 
$\eta_r^{(n)}=(-1)^{n-1}\eta_{nr}$.\qed

\section{Examples of Theorem \ref{t1.3}}\label{S5} 
{\it Exponential Gauss sums $g_r(b,e)$ $(b \in \F_{p^r})$ 
of degree $e$ for $\F_{p^r}$} are defined by
\[
g_r(b,e):=\sum_{\alpha \in \F_{p^r}}\zeta_p^{{\rm Tr}(b\alpha^e)}.
\]
We write $g_r(e):=g_r(1,e)$. 
We also define {\it the reduced Gaussian periods} 
$\eta_r^*(i)$ of order $e$ for $\F_{p^r}$ as 
\[
\eta_r^*(i):=e\eta_r(i)+1=g_r(\gamma^i,e)
\]
for $0\leq i\leq e-1$ (see \cite[page 327]{BEW98}). 
Let $\chi$ be the character on $\F_{p^r}$ with 
$\chi(\gamma)=\zeta_e$ and $\chi(0)=0$ where 
$\langle\gamma\rangle=\F_{p^r}^\times$ as before. 
We see that the Gauss sums $G^*_r(\chi^j)$ 
and the reduced Gaussian periods $\eta_r^*(i)$ 
satisfy the following relations 
\begin{align*}
G^*_r(\chi^j)&=
\begin{cases}
\displaystyle{\frac{1}{e}\sum_{i=0}^{e-1}\zeta_e^{ij}\eta^*_r(i)}&{\rm if}\quad 1\leq j\leq e-1\\
-1&{\rm if}\quad j=0,
\end{cases}\\
\eta_r^{\ast}(i)&=\sum_{j=1}^{e-1}\zeta_e^{-ij}G^*_{r}(\chi^j)
\end{align*}
(see \cite[Proposition 1 (g)]{Mye81}, 
\cite[page 332, Theorem 10.10.8]{BEW98}, 
cf. the proof of Lemma \ref{l4.1}).
In particular, we have
\begin{align*}
g_r(e)=\eta^*_r(0)=\sum_{j=1}^{e-1}G^*_r(\chi^j)=\sum_{j=1}^{e-1}G_r(\chi^j).
\end{align*}

We also use {\it the reduced period polynomial} 
$P_{e,r}^{\ast}(X):=\prod^{e-1}_{i=0}\left(X-\eta^{\ast}_r(i)\right)$ 
of degree $e$ for $\F_{p^r}$ 
with the coefficient of $X^{e-1}$ zero. 
An explicit determination of the factors of $P_{e,r}^*(X)$ 
is important because the exponential Gauss sum $g_r(e)=\eta^*_r(0)$ 
becomes a root of $P_{e,r}^*(X)$ (see also Section \ref{S7}). 


By applying Theorem \ref{t1.3}, we can obtain the reduced 
Gaussian periods 
$\eta_e^*(i)$ $(i=0,\ldots,e-1)$ 
of order $e$ for $\F_{p^e}$ as follows (we take the generator $\gamma$ of $\F_p^\times=\langle \gamma \rangle$ as the smallest one):

(1) $e=3$. 
We take $p=7=ef+1$ with $f=2$ and $\gamma=3$. 
Then $\eta_3^*(i)$ $(i=0,1,2)$ are given by 
$7$, $-35$, $28$ (in this order).

(2) $e=5$. 
We take $p=11=ef+1$ with $f=2$ and $\gamma=2$. 
Then 
$\eta_5^*(i)$ $(i=0,\ldots,4)$ are given by 
$-979$, $-649$, $1276$, $-99$, $451$. 

(3) $e=7$. 
We take $p=29=ef+1$ with $f=4$ and $\gamma=2$. 
Then 
$\eta_7^*(i)$ $(i=0,\ldots,6)$ are given by 
$-317869$, $-259405$, $-324771$, $442569$, $233682$, $-182671$, $408465$. 

(4) $e=11$. 
We take $p=23=ef+1$ with $f=2$ and $\gamma=5$. 
Then 
$\eta_{11}^*(i)$ $(i=0,\ldots,10)$ are given by 
$52918009$, $3199967$, $-202694722$, $-64390754$, $142959444$, 
$-23093817$, $166665038$, $-19592803$, $47121273$, $-58652208$, $-44439427$. 

(5) $e=13$. 
We take $p=53=ef+1$ with $f=4$ and $\gamma=2$. 
Then 
$\eta_{13}^*(i)$ $(i=0,\ldots,12)$ are given by 
$782475795674$, $338244988654$, $-245670171356$, $83828569254$, 
$-740552966334$, \\
$-910543059425$, $117899008800$, $664438112586$, $-186980700750$, $-238169301889$, $-277653262665$, $1040615291340$, $-427932303889$. 

(6) $e=17$. 
We take $p=103=ef+1$ with $f=6$ and $\gamma=5$. 
Then 
$\eta_{17}^*(i)$ $(i=0,\ldots,16)$ are given by 
$-651513206543247755$, 
$670088231006862759$, 
$-373934090375919493$,\\
$587253242462231659$, 
$-243310155546790559$,
$163898849457734107$, 
$-197783211402587952$, \\ 
$-1253189038565026183$, 
$35922811461007315$, 
$356621718684896633$, 
$-478731856802195967$, \\
$-289516205265127375$, 
$461908111585063663$,
$464742031061114921$, 
$670357206530506901$, \\
$282238003107978403$, 
$-205052440856501077$.\\ 

We give GAP (\cite{GAP}) computations for 
(6) $e=17$, $p=103$ and $\gamma=5$. 
The cases (1)--(5) can be obtained by the similar manner. 

\begin{verbatim}
gap> etaf:=function(e,p,i)
> local g;
> g:=PrimitiveRootMod(p);
> if i<0 then i:=i mod e;
> fi;
> return Sum([1..(p-1)/e],j->E(p)^(g^(e*j+i)));
> end;
function( e, p, i ) ... end
gap> e:=17;;p:=103;;PrimitiveRootMod(p); # e=17, p=103, g=5
5
gap> eta:=function(i)
> return(etaf(e,p,i));
> end;
function( i ) ... end
gap> p2:=function(i)
> return Sum([0..e-1],k1->eta(k1)*eta(-k1+i));
> end;
function( i ) ... end
gap> p4:=function(i)
> return Sum([0..e-1],k1->p2(k1)*p2(-k1+i));
> end;
function( i ) ... end
gap> p8:=function(i)
> return Sum([0..e-1],k1->p4(k1)*p4(-k1+i));
> end;
function( i ) ... end
gap> p9:=function(i)
> return Sum([0..e-1],k1->p8(k1)*eta(-k1+i));
> end;
function( i ) ... end
gap> p17:=function(i)
> return Sum([0..e-1],k1->p9(k1)*p8(-k1+i));
> end;
function( i ) ... end
gap> L:=List([0..e-1],i->p17(i)); # Gaussian periods eta_{e,i}: i=0,...16
[ -38324306267249868, 39416954765109574, -21996122963289382, 
  34544308380131274, -14312362090987680, 9641108791631418, 
  -11634306553093409, -73717002268530952, 2113106556529842, 
  20977748157935096, -28160697458952704, -17030365015595728, 
  27171065387356686, 27337766533006760, 39432776854735700, 
  16602235476939906, -12061908285676534 ]
gap> 17*L+1; # reduced Gaussian periods eta^*_{e,i}: i=0,...16
[ -651513206543247755, 670088231006862759, -373934090375919493, 
  587253242462231659, -243310155546790559, 163898849457734107, 
  -197783211402587952, -1253189038565026183, 35922811461007315, 
  356621718684896633, -478731856802195967, -289516205265127375, 
  461908111585063663, 464742031061114921, 670357206530506901, 
  282238003107978403, -205052440856501077 ]
\end{verbatim}

\section{Proof of Theorem \ref{t1.4}}\label{S6} 
For $A=[a_{i,j}]_{0\leq i,j\leq e-1}\in M_e(K)$, 
we write $A[i,j]=a_{i,j}$ for convenience. 
By the definition, for the multiplication matrix $C_{nr}$ of 
Gaussian periods $\eta_{nr}(0),\ldots,\eta_{nr}(e-1)$, we have
\[
\eta_{nr}(i)\eta_{nr}(j)=
\sum^{e-1}_{k=0}C_{nr}[j-i,k-i]\eta_{nr}(k).
\]
By Theorem \ref{t1.3} (the dual form of Davenport and Hasse's lifting theorem), we obtain 
\[
\eta_{r}^{(n)}(i)\eta_{r}^{(n)}(j)=
\sum^{e-1}_{k=0}(-1)^{n-1}C_{nr}[j-i,k-i]\eta_{r}^{(n)}(k).
\]
By setting $l:=j-i$ and $m:=k-i$, we get
\begin{align}\label{eq5}
\eta_{r}^{(n)}(i)\eta_{r}^{(n)}(i+l)=
\sum^{e-1}_{m=0}(-1)^{n-1}C_{nr}[l,m]\eta_{r}^{(n)}(i+m).
\end{align}
On the other hand, by Proposition \ref{p3.5}, we have
\[
\eta_{r}^{(n)}(i)\eta_{r}^{(n)}(j)=
\sum^{e-1}_{k=0}C_r^{(n)}[j-i,k-i]\eta_{r}^{(n)}(k).
\]
By setting $l:=j-i$ and $m:=k-i$ also, we obtain
\begin{align}\label{eq6}
\eta_{r}^{(n)}(i)\eta_{r}^{(n)}(i+l)=
\sum^{e-1}_{m=0}C_r^{(n)}[l,m]\eta_{r}^{(n)}(i+m).
\end{align}
Equations (\ref{eq5}) and (\ref{eq6}) imply that $(-1)^{n-1}C_{nr}$ and $C_r^{(n)}$ have the same eigenvalues $\eta_{r}^{(n)}(i)$ $(0\leq i \leq e-1)$ 
and the same eigenvector 
\[
T_i=(\eta_{r}^{(n)}(i),\eta_{r}^{(n)}(i+1),\ldots,\eta_{r}^{(n)}(0),\eta_{r}^{(n)}(1),\ldots,\eta_{r}^{(n)}(i-1))^t
\]
with respect to 
$\eta_{r}^{(n)}(i)$ where $t$ stands for the transposed vector. 

We take the circulant matrix $P:=(T_0,\ldots,T_{e-1})$ with 
determinant $\prod_{j=0}^{e-1}f(\zeta_e^j)$ 
where $f(x)=\sum_{i=0}^{e-1} \eta_{r}^{(n)}(i)x^i$. 
We see that the matrix $P$ is invertible because 
there exist at least two distinct $\eta_{r}^{(n)}(i)=(-1)^{n-1}\eta_{nr}(i)$
by Baumert, Mills and Ward \cite[Lemma 2 and the proof of Lemma 3]{BMW82}. 
Hence the both 
$(-1)^{n-1}C_{nr}$ and $C_r^{(n)}$ are diagonalized by the same $P$: 
\[
P^{-1}(-1)^{n-1}C_{nr}P=
\begin{pmatrix}
 \eta_{r}^{(n)}(0) & &  \\
 & \ddots &  \\
 & & \eta_{r}^{(n)}(e-1) \\ 
\end{pmatrix}
=P^{-1}C_r^{(n)}P.
\]
This implies that $(-1)^{n-1}C_{nr}=C_r^{(n)}$.
\qed

\section{Applications of Theorem \ref{t1.4}}\label{S7} 

We give some applications of Theorem \ref{t1.4} 
when $e=l$ is an odd prime 
which illustrate relations among 
lifts of Jacobi sums, Gaussian periods and multiplication matrices 
of Gaussian periods as in 
Theorem \ref{t1.1}, Theorem \ref{t1.3} and Theorem \ref{t1.4} respectively. 
Note that Theorem \ref{t1.4} enables us to get lifts of 
multiplication matrices of Gaussian periods within the base field $\Q$ 
although Davenport and Hasse's lifting theorem for Jacobi sums 
needs to consider the extended field $\Q(\zeta_l)$. 
In particular, we can recover results \cite[Theorem 1, Corollary 9]{Hos06} 
for the reduced period polynomial $P_{5,5s}^*(X)$ and 
the exponential Gauss sums $g_{5s}(5)$ (see (2) below). 

Let $p$ be a prime with $p\equiv 1\ ({\rm mod}\ l)$. 
Katre and Rajwade \cite[Main theorem, page 186]{KR85a} gave 
some system of Diophantine equations whose unique solution gives 
the coefficient $a_1(n),\ldots,a_{l-1}(n)$ of the Jacobi sums 
$J_r(1,n)=\sum_{i=1}^{l-1}a_i(n)\zeta_l^i \in \Z[\zeta_l]$ 
$(1\leq n\leq l-2)$ 
and the cyclotomic numbers ${\rm Cyc}_{r}(i,j)$ of order $l$ 
are obtained in terms of $a_1(n),\ldots,a_{l-1}(n)$ $(1\leq n\leq l-2)$ as 
\begin{align*}
l^2{\rm Cyc}_{r}(0,0)&=p^r-3l+1- \sum_{n=1}^{l-2}\sum_{k=1}^{l-1}a_k(n),\\
l^2{\rm Cyc}_{r}(i,j)&=l\left(\delta_{i,0}+\delta_{0,j}+\delta_{i,j}+\sum_{n=1}^{l-2}a_{in+j}(n)\right)+l^2{\rm Cyc}_{r}(0,0) 
\end{align*}
where the subscripts in $a_{in+j} (n)$ are considered modulo $l$. 
See also van Wamelen \cite{Wam02} 
for general cases where $e\geq3$ and $p^r\equiv 1\ ({\rm mod}\ e)$. 
Recall that $C_r = [{\rm Cyc}_r(i,j) - D_i f]_{0\leq i,j\leq e-1}$ is 
the multiplication matrix of the Gaussian periods 
$\eta_r(0),\ldots,\eta_r(e-1)$ of degree $e$ for $\F_{p^{r}}$. 
Hence Theorem \ref{t1.4} gives explicit lifts of 
not only 
the multiplication matrix $C_r$ 
but also of 
cyclotomic numbers ${\rm Cyc}_r(i,j)$ and of Jacobi sums $J_r(i,j)$ 
from $\F_{p^r}$ to $\F_{p^{nr}}$. 
Recall also that the Jacobi sum $J_r(i,j)=J_r(\chi^i,\chi^j)
=\sum_{\alpha \in \F_{p^r}}\chi^i(\alpha)\chi^j(1-\alpha)$
where $\chi$ is a character of order $e$  on $\F_{p^r}$ 
with $\chi(\gamma)=\zeta_e$, 
$\F_{p^r}^\times=\langle\gamma\rangle$ and $\chi(0)=0$. 

Computing exponential Gauss sums $g_{er}(\lambda^i,e)=\eta_{er}^*(i)$ 
($\F_{p^{er}}^\times=\langle\lambda\rangle$) 
is important because it is equivalent to that of 
the weight distribution of irreducible cyclic codes 
(see McEliece and Rumsey \cite{MR72}, 
Baumert and McEliece \cite[Theorem 6]{BM72}, 
McEliece \cite[Section 2]{McE74}, 
Ding and Yang \cite[Section 3]{DY13} and \cite[Section 11.7]{BEW98}). 

We give applications of Theorem \ref{t1.4} 
for prime degree $e=l$ with $3\leq l\leq 23$.\\

(1) $e=3$ (cf. Gauss \cite[Section 358]{Gau01}, Katre and Rajwade \cite[Proposition 1]{KR85a}, \cite[Section 3.1, Section 10.10]{BEW98}). 
Let $p$ be a prime with $p\equiv 1\ ({\rm mod}\ 3)$. 
The Jacobi sum $J_r(1,1)$ is given by 
\begin{align*}
J_r(1,1)=J(c,d)=\frac{c+3d}{2}+3d\zeta_3
\end{align*}
where $c,d\in\Z$ are given as the integer solutions of 
the Diophantine equation
\begin{align}
\begin{cases}
4p^r=c^2+27d^2,\\
c\equiv 1\ ({\rm mod}\ 3),\ p\nmid c.
\end{cases}\label{Eq3}
\end{align}
The equations have two solutions $(c,\pm d)$ 
and the sign of $d$ depends on the choice of $\gamma$. 
The unique solution with respect to $\gamma$ can be determined by 
\begin{align*}
\gamma^{(p^r-1)/3}\equiv\frac{c+9d}{c-9d}\ ({\rm mod}\ p)
\end{align*}
(see Katre and Rajwade \cite[Proposition 1]{KR85a}, \cite[Section 3.1]{BEW98}). 
The multiplication matrix $C_r$ of the Gaussian periods 
$\eta_r(0),\eta_r(1),\eta_r(2)$ of degree $3$ for 
$\F_{p^r}$ is given by
\begin{align*}
C_r=C_r(p,c,d)=
\left(
\begin{array}{ccccc}
 A-f & B-f & C-f\\
 B & C & D\\
 C & D & B\\
\end{array}
\right)
\end{align*}
where 
\begin{align*}
A&={\rm Cyc}_r(0,0)=\frac{1}{9}(p^r+c-8),& 
B&={\rm Cyc}_r(0,1)=\frac{1}{18}(2p^r-c+9d-4),\\ 
C&={\rm Cyc}_r(0,2)=\frac{1}{18}(2p^r-c-9d-4),& 
D&={\rm Cyc}_r(1,2)=\frac{1}{9}(p^r+c+1)
\end{align*}
(see \cite[Section 2.3]{BEW98}). 
Then we have 
\begin{align*}
P_{3,r}(X)&={\rm Char}_X(C_r(p,c,d)),\\
P_{3,r}^*(X)&=P_{3,r}^*(p,c,d;X)=3^3P_{3,r}((X-1)/3)=X^3-3p^rX-p^rc.
\end{align*}

By Theorem \ref{t1.4}, we get 
\begin{align*}
C_{nr}&=C_{nr}(p,c,d)=(-1)^{n-1}C_r(p,c,d)^{(n)}
=C_r(p^n,c^{(n)},d^{(n)})
\end{align*}
where $c^{(n)}$, $d^{(n)}$ can be obtained as a form of degree $n$ 
in $c$, $d$. 
Note that $c^{(n)}$, $d^{(n)}$ satisfy 
the equation (\ref{Eq3}) with respect to $p^{nr}$ 
instead of $p^r$. 
In particular, we get $J_{nr}(1,1)=J(c^{(n)},d^{(n)})$ and 
\begin{align*}
p^{nr}=\left(\frac{c^2+27d^2}{4}\right)^n
=\frac{(c^{(n)})^2+27(d^{(n)})^2}{4}.
\end{align*}
For $n=2,3$, we can obtain that 
\begin{align*}
c^{(2)}&=\frac{1}{2}(-c^2+27d^2),\ d^{(2)}=-cd,\\
c^{(3)}&=\frac{1}{4}c(c+9d)(c-9d),\ d^{(3)}=\frac{3}{4}d(c+3d)(c-3d).
\end{align*}
and 
\begin{align*}
P_{3,2r}^*(X)=P_{3,r}^*(p^2,c^{(2)},d^{(2)};X)
&=X^3-3p^{2r}X-p^{2r}c^{(2)}\\
&=X^3-3p^{2r}X-\frac{1}{2}p^{2r}(-c^2+27d^2),\\
P_{3,3r}^*(X)=P_{3,r}^*(p^3,c^{(3)},d^{(3)};X)
&=X^3-3p^{3r}X-p^{3r}c^{(3)}\\
&=X^3-\frac{3}{4}p^{2r}(c^2+27d^2)X-\frac{1}{4}\left(p^{3r}c(c+9d)(c-9d)\right)\\
&=\left(X-p^rc\right)\left(X+p^r\,\frac{c+9d}{2}\right)\left(X+p^r\,\frac{c-9d}{2}\right)
\end{align*}
(see \cite[Section 12.10, page 427]{BEW98}). 
Because the exponential cubic Gauss sum 
$g_{3r}(3)=\eta_{3r}^*(0)$ is one of the roots 
of $P_{3,3r}^*(X)$ which 
does not depend on the choice of $\gamma$, 
we get 
\begin{align*}
g_{3r}(3)=p^rc. 
\end{align*}
Similarly, for $n=3m$, we have 
\begin{align*}
g_{3mr}(3)&=p^{mr}c^{(m)}. 
\end{align*}

\medskip
(2) $e=5$ (cf. Lehmer \cite[Equation (10)]{Leh51}, Berndt and Evans \cite[Section 5]{BE81},  Katre and Rajwade \cite{KR85b}, \cite[Section 3.7]{BEW98}, Hoshi \cite[Section 5]{Hos03}, \cite[Section 3]{Hos06}).
Let $p$ be a prime with $p\equiv 1\ ({\rm mod}\ 5)$. 
The Jacobi sum $J_r(1,1)$ is given by 
\begin{align*}
J_r(1,1)=J(x,w,v,u)=
\frac{1}{4}\left(Z\zeta_5+\sigma^3(Z)\zeta_5^2+\sigma(Z)\zeta_5^3+\sigma^2(Z)\zeta_5^4\right)
\end{align*}
where $Z=-x+5w+4v+2u$, $\sigma(x,w,v,u)=(x,-w,-u,v)$ and
$x,w,v,u\in\Z$ are obtained as the integer solutions of 
the system of Diophantine equations 
\begin{align}
\begin{cases}
16p^r=x^2+125w^2+50v^2+50u^2,\\
xw=v^2-4vu-u^2, \\
x \equiv 1\ ({\rm mod}\ 5),\ p\nmid x^2-125w^2.
\end{cases}\label{Eq5}
\end{align}
The equations have four solutions $\sigma^i(x,w,v,u)$ $(i=0,1,2,3)$ 
which depend on the choice of $\gamma$. 
The unique solution with respect to $\gamma$ can be determined by 
\begin{align*}
\gamma^{(p^r-1)/5}\equiv\frac{x^2-125w^2-10(2xu-xv-25wv)}{x^2-125w^2+10(2xu-xv-25wv)}\ ({\rm mod}\ p)
\end{align*}
(see Katre and Rajwade \cite[Theorem 1]{KR85b}). 
The multiplication matrix of $C_r$ of the Gaussian periods 
$\eta_{r}(0),\ldots,\eta_{r}(4)$ of order $5$ for $\F_{p^{r}}$ is given by
\begin{align*}
C_r=C_r(p,x,u,v,w)=
\left(
\begin{array}{ccccc}
 A-f & B-f & C-f & D-f & E-f \\
 B & E & F & G & F \\
 C & F & D & G & G \\
 D & G & G & C & F \\
 E & F & G & F & B \\
\end{array}
\right)
\end{align*}
where 
\begin{align*}
A&=\frac{1}{25}(p^r+3x-14),\\
B&=\frac{1}{100}(4p^r-3x+25w+50v-16),& 
C&=\frac{1}{100}(4p^r-3x-25w+50u-16),\\
D&=\frac{1}{100}(4p^r-3x-25w-50u-16),& 
E&=\frac{1}{100}(4p^r-3x+25w-50v-16),\\
F&=\frac{1}{50}(2p^r+x-25w+2),& 
G&=\frac{1}{50}(2p^r+x+25w+2).
\end{align*}
Then we have 
\begin{align*}
P_{5,r}(X)&={\rm Char}_X(C_r(p,x,u,v,w)),\\
P_{5,r}^*(X)&=P_{5,r}^*(p,x,w,v,u;X)=5^5P_{5,r}((X-1)/5)\\
&=X^5-10p^rX^3-5p^rxX^2
+\frac{5}{4}p^r(4p^r-x^2+125w^2)X+\frac{1}{8}p^r(-x^3+8p^rx+625w(v^2-u^2)) 
\end{align*}
(see \cite[Equation (10)]{Leh51}, \cite[Section 5]{BE81}, \cite[Section 3]{Hos06}). 

By Theorem \ref{t1.4}, we get 
\begin{align*}
C_{nr}=C_{nr}(p,x,w,v,u)=(-1)^{n-1}C_r(p,x,w,v,u)^{(n)}
=C_r(p^n,x^{(n)},w^{(n)},v^{(n)},u^{(n)})
\end{align*}
where  
$x^{(n)}$, $w^{(n)}$, $v^{(n)}$, $u^{(n)}$ 
can be obtained as a form of degree $n$ 
in $x$, $w$, $v$, $u$. 
Note that $x^{(n)}$, $w^{(n)}$, $v^{(n)}$, $u^{(n)}$ 
satisfy the equation (\ref{Eq5}) with respect to 
$p^{nr}$ instead of $p^r$. 
In particular, we have 
$J_{nr}(1,1)=J(x^{(n)},w^{(n)},v^{(n)},u^{(n)})$,  
\begin{align*}
p^{nr}&=\left(\frac{x^2+125w^2+50v^2+50u^2}{16}\right)^n
=\frac{(x^{(n)})^2+125(w^{(n)})^2+50(v^{(n)})^2+50(u^{(n)})^2}{16},\\
x^{(n)}w^{(n)}&=(v^{(n)})^2-4v^{(n)}u^{(n)}-(u^{(n)})^2.
\end{align*}
For $n=2$, we can obtain that 
\begin{align*}
x^{(2)}&=\frac{1}{4}(-x^2-125w^2+50v^2+50u^2),\\
w^{(2)}&=\frac{1}{2}(-xw-v^2+4vu+u^2),\\
v^{(2)}&=\frac{1}{2}(-xv-10wu+5vw),\\
u^{(2)}&=\frac{1}{2}(-xu-10wv-5uw).
\end{align*} %
Continuing the argument, we get 
$x^{(5)}$, $w^{(5)}$, $v^{(5)}$, $u^{(5)}$ and hence 
$P_{5,5r}(X)={\rm Char}_X(C_r^{(5)})$ and 
\begin{align*}
P_{5,5r}^*(X)
=\left(X-\frac{p^r}{16}L\right)
\prod_{l=0}^3\left(X-\frac{p^r}{64}\sigma^l(M)\right)
\end{align*}
where 
\begin{align*}
L=L(x,w,v,u)=&\ x^3-50(v^2+u^2)w-125(11v^2-4vu-11u^2),\\
M=M(x,w,v,u)=&-x^3+25x\left(2ux+(7v-u)(v+3u)\right)\\
&+125w\left(25w^2+10(4v-3u)w+(7v-u)(v+3u)\right)\\
&+500(-2v^3-3v^2u+6vu^2+u^3)
\end{align*}
(see \cite[Theorem 1]{Hos06}). 
Because the exponential quintic Gauss sum 
$g_{5r}(5)=\eta_{5r}^*(0)$ is one of the roots 
of $P_{5,5r}^*(X)$ which 
does not depend on the choice of $\gamma$, 
we have 
\begin{align*}
g_{5r}(5)=\frac{p^r}{16}L(x,w,v,u) 
\end{align*}
(see \cite[Corollary 9]{Hos06}).
Similarly, for $n=5m$, we have 
\begin{align*}
g_{5mr}(5)&=\frac{p^{mr}}{16}L(x^{(m)},w^{(m)},v^{(m)},u^{(m)}). 
\end{align*}

For example, we take $p=11$ and $r=1$. 
Then we have $g_5(5)=\eta^*_{5}(0)=\frac{11}{16}L(1,1,1,0)=-979=-11\cdot 89$. 
Indeed, we may check that 
$g_5(5)=13751\cdot \zeta_{11}^0
+14730\cdot \sum_{i=1}^{10}\zeta_{11}^i=13751-14730=-979$ 
by the definition using a computer (cf. Section \ref{S5} (2) $e=5$).\\

(3) $e=7$ (cf. Leonard and Williams \cite{LW75}, \cite[Section 3.9]{BEW98}). 
Let $p$ be a prime with $p\equiv 1\ ({\rm mod}\ 7)$. 
The Jacobi sums $J_r(1,1)$ and $J_r(1,2)$ are given by 
\begin{align*}
J_r(1,1)&=J(x_1,x_2,x_3,x_4,x_5,x_6)\\
&=\frac{1}{12}\left(Z\zeta_7+\sigma^4(Z)\zeta_7^2+\sigma^5(Z)\zeta_7^3
+\sigma^2(Z)\zeta_7^4+\sigma(Z)\zeta_7^5+\sigma^3(Z)\zeta_7^6\right),\\
J_r(1,2)&=J^\prime(t,u)=-t+u\sqrt{-7}
\end{align*}
where $Z=-2x_1+6x_2+7x_5+21x_6$, 
$\sigma(x_1,x_2,x_3,x_4,x_5,x_6)=(x_1,-x_3,x_4,x_2,(-x_5-3x_6)/2,(x_5-x_6)/2)$ 
and 
$x_1,x_2,x_3,x_4,x_5,x_6,t,u\in\Z$ are obtained as 
the integer solutions of the system of Diophantine equations 
\begin{align}
\begin{cases}
72p^r=2x_1^2+42x_2^2+42x_3^2+42x_4^2+343x_5^2+1029x_6^2, \\
12x_2^2-12x_4^2+147x_5^2-441x_6^2+56x_1x_6+24x_2x_3
-24x_2x_4+48x_3x_4+98x_5x_6=0, \\
12x_3^2-12x_4^2+49x_5^2-147x_6^2+28x_1x_5+28x_1x_6
+48x_2x_3+24x_2x_4+24x_3x_4\\
+490x_5x_6=0,\\
x_1\equiv 1\ ({\rm mod}\ 7),\ (x_5,x_6)\neq (0,0),\\
p^r=t^2+7u^2,\ t\equiv 1\ ({\rm mod}\ 7),\ u\equiv 3x_2+2x_3\ ({\rm mod}\ 7).
\end{cases}\label{Eq7}
\end{align}
The equations has six solutions 
$\sigma^i(x_1,x_2,x_3,x_4,x_5,x_6)$ $(i=0,1,2,3,4,5)$ 
which depend on the choice of $\gamma$. 
The multiplication matrix $C_r$ of the Gaussian periods 
$\eta_{r}(0),\ldots,\eta_{r}(6)$ of order $7$ for $\F_{p^{r}}$ is given by
\begin{align*}
C_r
&=\,C_r(p,x_1,x_2,x_3,x_4,x_5,x_6,t,u)\\
&=
\left(
\begin{array}{ccccccc}
 A-f & B-f & C-f & D-f & E-f & F-f & G-f \\
 B & G & H & I & J & K & H \\
 C & H & F & K & L & L & I \\
 D & I & K & E & J & L & J \\
 E & J & L & J & D & I & K \\
 F & K & L & L & I & C & H \\
 G & H & I & J & K & H & B \\
\end{array}
\right)
\end{align*}
where 
\begin{align*}
A&=\frac{1}{49}(p^r+3x_1-12t-20),\\
B&=\frac{1}{196}(4p^r-2x_1+28x_2-14x_3+49x_5+49x_6+8t+56u-24),\\
C&=\frac{1}{98}(2p^r-x_1+14x_3+7x_4-49x_6+4t+28u-12),\\
D&=\frac{1}{196}(4p^r-2x_1+14x_2+28x_4-49x_5+49x_6+8t-56u-24),\\
E&=\frac{1}{196}(4p^r-2x_1-14x_2-28x_4-49x_5+49x_6+8t+56u-24),\\
F&=\frac{1}{98}(2p^r-x_1-14x_3-7x_4-49x_6+4t-28u-12),\\
G&=\frac{1}{196}(4p^r-2x_1-28x_2+14x_3+49x_5+49x_6+8t-56u-24),\\
H&=\frac{1}{147}(3p^r+2x_1-49x_5+6t+3),\\
I&=\frac{1}{98}(2p^r-x_1+7x_2+7x_3-7x_4-10t-14u+2),\\
J&=\frac{1}{294}(6p^r+4x_1+49x_5-147x_6+12t+6),\\
K&=\frac{1}{98}(2p^r-x_1-7x_2-7x_3+7x_4-10t+14u+2),\\
L&=\frac{1}{294}(6p^r+4x_1+49x_5+147x_6+12t+6)
\end{align*}
(see \cite[Theorem]{LW75} with a typo for $B$ 
($147 x_4$ should be $147 x_5$)).
Then we have 
\begin{align*}
P_{7,r}(X)&={\rm Char}_X(C_r(p,x_1,x_2,x_3,x_4,x_5,x_6,t,u)),\\
P_{7,r}^*(X)&=P^*_{7,r}(p,x_1,x_2,x_3,x_4,x_5,x_6,t,u;X)=7^7P_{7,r}((X-1)/7)\\
&=X^7-21p^rX^5+a_4X^4+a_3X^3+a_2X^2+a_1X+a_0
\end{align*}
where $a_i=a_i(p,x_1,x_2,x_3,x_4,x_5,x_6,t,u)$ $(0\leq i\leq 4)$ 
can be obtained explicitly (we omit the display here). 

By Theorem \ref{t1.4}, we get 
\begin{align*}
C_{nr}=(-1)^{n-1}C_r(p,x_1,x_2,x_3,x_4,x_5,x_6,t,u)^{(n)}
=C_r(p^n,x_1^{(n)},x_2^{(n)},x_3^{(n)},x_4^{(n)},x_5^{(n)},x_6^{(n)},t^{(n)},u^{(n)})
\end{align*}
where $x_1^{(n)}$, $x_2^{(n)}$, $x_3^{(n)}$, 
$x_4^{(n)}$, $x_5^{(n)}$, $x_6^{(n)}$, $t^{(n)}$, $u^{(n)}$ 
satisfy the equation (\ref{Eq7}) with respect to $p^{nr}$ instead of $p^r$. 
In particular, we have 
$J_{nr}(1,1)=J(x_1^{(n)},x_2^{(n)},x_3^{(n)},x_4^{(n)},x_5^{(n)},x_6^{(n)})$, 
$J_{nr}(1,2)=J^\prime(t^{(n)},u^{(n)})$ and 
\begin{align*}
p^{nr}&=\left(\frac{2x_1^2+42x_2^2+42x_3^2+42x_4^2+343x_5^2+1029x_6^2}{72}\right)^n\\
&=\frac{2(x_1^{(n)})^2+42(x_2^{(n)})^2+42(x_3^{(n)})^2+42(x_4^{(n)})^2+343(x_5^{(n)})^2+1029(x_6^{(n)})^2}{72},\\
p^{nr}&=(t^2+7u^2)^n=(t^{(n)})^2+7(u^{(n)})^2.
\end{align*}
For $n=2$, we get 
\begin{align*}
x_1^{(2)} &= \frac{1}{12}(-2x_1^2 + 42x_2^2 + 42x_3^2 + 42x_4^2 - 343x_5^2 -1029x_6^2),\\
x_2^{(2)} &= \frac{1}{12}(-4x_1x_2 + 14x_2x_5 - 42x_3x_5 - 42x_4x_5 - 42x_2x_6 - 42x_3x_6 + 42x_4x_6),\\
x_3^{(2)} &= \frac{1}{12}(-4x_1x_3 - 42x_2x_5 - 28x_3x_5 - 42x_2x_6 - 84x_4x_6),\\
x_4^{(2)} &= \frac{1}{12}(-4x_1x_4 - 42x_2x_5 + 14x_4x_5 + 42x_2x_6 - 84x_3x_6 + 42x_4x_6),\\
x_5^{(2)} &= \frac{1}{168}(-12x_2^2 + 72x_2x_3 + 24x_3^2 + 72x_2x_4 - 12x_4^2 - 56x_1x_5 + 49x_5^2 - 882x_5x_6 - 147x_6^2),\\
x_6^{(2)} &= \frac{1}{168}(12x_2^2 + 24x_2x_3 - 24x_2x_4 + 48x_3x_4 - 12x_4^2 - 147x_5^2 - 56x_1x_6 - 98x_5x_6 + 441x_6^2),\\
t^{(2)}&=t^2-7u^2,\\
u^{(2)}&=2tu. 
\end{align*}

For example, we take $p=29=ef+1$ with $f=4$. 
Then we get 
\begin{align*}
P_{7,7}^*(X)
=&\ (X-442569)(X-408465)(X-233682)(X+182671)\\
&\cdot(X+259405)(X+317869)(X+324771).
\end{align*}
Indeed, we may check that 
$g_7(7)=594516413\cdot \zeta_{29}^0
+594834282\cdot \sum_{i=1}^{28}\zeta_{29}^i=594516413-594834282=-317869=-29\cdot 97\cdot 113$ 
by the definition using a computer 
(cf. Section \ref{S5} (3) $e=7$).\\

(4) $e=11$, $e=13$ and $e=17$. 
By using Thaine's formula \cite[page 259]{Tha04}, 
we can obtain the multiplication matrix $C_1$ of 
the Gaussian periods
$\eta_{1}(0),\ldots,\eta_{1}(e-1)$ of degree $e$ for $\F_{p^1}$.
By using Theorem \ref{t1.4} as in the case of $e=7$, 
we get $C_{e}=C_{1}^{(e)}$, $P_{e,e}(X)={\rm Char}_X(C_1^{(e)})$
and the explicit factorization of $P_{e,e}^*(X)$ into $e$ linear factors. 
For example, we take $p=23=ef+1$ with $e=11$ and $f=2$. 
Then, we get $C_{11}=C_{1}^{(11)}$, 
$P_{11,11}(X)={\rm Char}_X(C_1^{(11)})$ and 
\begin{align*}
P_{11,11}^*(X)=
&\ (X-166665038)(X-142959444)(X-52918009)(X-47121273)\\
&\cdot (X-3199967)(X+19592803)(X+23093817)(X+44439427)\\
&\cdot (X+58652208)(X+64390754)(X+202694722)
\end{align*}
with a root $g_{11}(11)=\eta^*_{11}(0)=52918009=23\cdot 53\cdot 43441$ 
(cf. Section \ref{S5} (4) $e=11$). 
Similarly, by using Theorem \ref{t1.4}, 
we can obtain for $p=53=ef+1$ with $e=13$ and $f=4$,
$P_{13,13}^*(X)$ with a root $g_{13}(13)=\eta^*_{13}(0)=782475795674
=2\cdot 53\cdot 7381847129$ and 
for $p=103=ef+1$ with $e=17$ and $f=6$, 
$P_{17,17}^*(X)$ with a root $g_{17}(17)=\eta^*_{17}(0)=-651513206543247755
=-5\cdot 7\cdot 103\cdot 172709\cdot 1046412659$ 
(cf. Section \ref{S5} (5) $e=13$, (6) $e=17$).\\

(5) $e=19$ and $e=23$. We take $p=191=ef+1$ with $e=19$ and $f=10$. 
As in the case (4), we can get 
$C_{19}=C_{1}^{(19)}$, 
$P_{19,19}(X)={\rm Char}_X(C_1^{(19)})$ 
and  the explicit factorization of 
$P_{19,19}^*(X)$ into $19$ linear factors. 
We see that $\eta^*_{19}(i)=p\,\xi_i$ 
and the $\xi_i$'s $(i\neq 0)$ are permuted under the action 
$\zeta_p\mapsto \zeta_p^\gamma$ with $\F_p^\times=\langle\gamma\rangle$
(which depends on the choice of $\gamma$). 
Because we see that 
$\xi_0\in \F_{191}^\times$ is of order $10$  
and $\xi_i \in \F_{191}^\times$ $(1\leq i\leq 18)$ is of order $190$, 
we can find 
$p\,\xi_0=g_{19}(19)=\eta^*_{19}(0)=2801935824159299141695
=5\cdot 191\cdot 509\cdot 26374987\cdot 218546963$. 
Similarly, for $p=47=ef+1$ with $e=23$ and $f=2$, 
we get $P^*_{23,23}(X)$ with a root 
$g_{23}(23)=\eta^*_{23}(0)=-492643134044787602=-2\cdot 17\cdot 
43\cdot 47\cdot 7169472509893$. 

~\\
We give GAP (\cite{GAP}) computations for examples above. 
The function ${\tt MultMat}(e,p,\gamma)$ returns 
the multiplication matrix $C_1$ of the Gaussian periods 
$\eta_{1}(0),\ldots,\eta_{1}(e-1)$ of degree $e$ for $\F_{p^1}$ 
with respect to the generator $\gamma$ of $\F_p^\times$
using Thaine's formula \cite[page 259]{Tha04}. 
The function ${\tt dComp}(A,B,d)$ 
returns the $d$-composition $A\overset{d}{\ast}B$ 
for two matrices $A$ and $B$.\\ 

\begin{verbatim}
MultMat:=function(e,p,g)
  local f,mat,j; 
  f:=(p-1)/e;
  mat:=List([0..e-1],i->List([0..e-1],j->(-1/e^2)*Sum([0..e-1],
    l->Sum([0..e],k->Binomial(f*k,f*l)*g^(f*(l*i-k*j)))) mod p));
  if IsEvenInt(f) then for j in [1..e] do mat[1,j]:=mat[1,j]-f;od; 
    else for j in [1..e] do mat[e/2+1,j]:=mat[e/2+1,j]-f;od;
  fi;
  return mat;
end;

Mode:=function(a,e)
  if a mod e = 0 then return e; else return a mod e;fi;
end;

dComp:=function(A,B,d)
  local s,t,i,j,e,mat;
  if Size(A)=Size(B) then e:=Size(A); else return "Input error";
  fi;
  mat:=List([0..e-1],i->List([0..e-1],j->Sum([0..e-1],
    s->Sum([0..e-1],t->A[s+1,t+1]*B[Mode(d*s+i+1,e),Mode(d*t+j+1,e)]))));
  return mat;
end;

gap> PrimitiveRootMod(7); # g=3
3
gap> C:=MultMat(3,7,3); # Multiplication matrix C1 for e=3, p=7, g=3
[ [ -2, -2, -1 ], 
  [ 0, 1, 1 ], 
  [ 1, 1, 0 ] ]
gap> C2:=dComp(C,C,-1); # C2=C^(2)
[ [ 10, 11, 12 ], 
  [ -5, -4, -7 ], 
  [ -4, -7, -5 ] ]
gap> C3:=dComp(C2,C,-1); # C3=C^(3)
[ [ -79, -72, -78 ], 
  [ 42, 36, 36 ], 
  [ 36, 36, 42 ] ]
gap> P3:=CharacteristicPolynomial(C3); # P3 is the priod polynomial for r=3
x_1^3+x_1^2-114*x_1+216
gap> R3:=RootsOfPolynomial(P3); # roots of P3
[ 9, 2, -12 ]
gap> L3:=List(R3,x->3*x+1); # roots of the reduced period polynomial P3^*
[ 28, 7, -35 ]
gap> X3:=L3/7;
[ 4, 1, -5 ]
gap> List(X3,x->x mod 7); # X3[2]=1 mod 7
[ 4, 1, 2 ]
gap> List(X3,x->x^2 mod 7);
[ 2, 1, 4 ]
gap> List(X3,x->x^3 mod 7); # X3[i] (i<>2) is of order 3 in F7^x
[ 1, 1, 1 ]
gap> L3[2]; # L3[2] is the exponential Gauss sum g_3(3)
7

gap> PrimitiveRootMod(11); # g=2
2
gap> C:=MultMat(5,11,2); # Multiplication matrix C1 for e=5, p=11, g=2
[ [ -2, -1, -2, -2, -2 ], 
  [ 1, 0, 0, 1, 0 ], 
  [ 0, 0, 0, 1, 1 ], 
  [ 0, 1, 1, 0, 0 ], 
  [ 0, 0, 1, 0, 1 ] ]
gap> C2:=dComp(C,C,-1);; # C2=C^(2)
gap> C4:=dComp(C2,C2,-1);; # C4=C^(4)
gap> C5:=dComp(C4,C,-1); # C5=C^(5)
[ [ -25721, -25790, -25680, -25830, -25820 ], 
  [ 6420, 6390, 6500, 6400, 6500 ], 
  [ 6530, 6500, 6380, 6400, 6400 ], 
  [ 6380, 6400, 6400, 6530, 6500 ], 
  [ 6390, 6500, 6400, 6500, 6420 ] ]
gap> P5:=CharacteristicPolynomial(C5); # P5 is the priod polynomial for r=5
x_1^5+x_1^4-64420*x_1^3-2589700*x_1^2+558588000*x_1+11695320000
gap> R5:=RootsOfPolynomial(P5); # roots of P5
[ 255, 90, -20, -130, -196 ]
gap> L5:=List(R5,x->5*x+1); # roots of the reduced period polynomial P5^*
[ 1276, 451, -99, -649, -979 ]
gap> X5:=L5/11;
[ 116, 41, -9, -59, -89 ]
gap> List(X5,x->x mod 11); # X5[5]=-1 mod 11
[ 6, 8, 2, 7, 10 ]
gap> List(X5,x->x^2 mod 11);
[ 3, 9, 4, 5, 1 ]
gap> List(X5,x->x^5 mod 11); # X5[i] (i<>5) is of order 10 in F11^x
[ 10, 10, 10, 10, 10 ]
gap> L5[5]; # L5[5] is the exponential Gauss sum g_5(5)
-979
gap> Factors(L5[5]);
[ -11, 89 ]

gap> PrimitiveRootMod(29); # g=2
2
gap> C:=MultMat(7,29,2); # Multiplication matrix C1 for e=7, p=29, g=2
[ [ -4, -3, -4, -4, -2, -4, -4 ], 
  [ 1, 0, 1, 0, 0, 1, 1 ], 
  [ 0, 1, 0, 1, 1, 1, 0 ], 
  [ 0, 0, 1, 2, 0, 1, 0 ], 
  [ 2, 0, 1, 0, 0, 0, 1 ], 
  [ 0, 1, 1, 1, 0, 0, 1 ], 
  [ 0, 1, 0, 0, 1, 1, 1 ] ]
gap> C2:=dComp(C,C,-1);; # C2=C^(2)
gap> C4:=dComp(C2,C2,-1);; # C4=C^(4)
gap> C3:=dComp(C2,C,-1);; # C3=C^(3)
gap> C7:=dComp(C4,C3,-1);; # C7=C^(7)
gap> P7:=CharacteristicPolynomial(C7);; # P7 is the priod polynomial for r=7
gap> R7:=RootsOfPolynomial(P7);; # roots of P7
gap> L7:=List(R7,x->7*x+1); # roots of the reduced period polynomial P7^*
[ 442569, 408465, 233682, -182671, -259405, -317869, -324771 ]
gap> X7:=L7/29;
[ 15261, 14085, 8058, -6299, -8945, -10961, -11199 ]
gap> List(X7,x->x mod 29); # X7[6]=1 mod 29
[ 7, 20, 25, 23, 16, 1, 24 ]
gap> List(X7,x->x^2 mod 29);
[ 20, 23, 16, 7, 24, 1, 25 ]
gap> List(X7,x->x^4 mod 29);
[ 23, 7, 24, 20, 25, 1, 16 ]
gap> List(X7,x->x^7 mod 29); # X7[i] (i<>6) is of order 7 in F29^x
[ 1, 1, 1, 1, 1, 1, 1 ]
gap> L7[6]; # L7[6] is the exponential Gauss sum g_7(7)
-317869
gap> Factors(L7[6]);
[ -29, 97, 113 ]

gap> PrimitiveRootMod(23); # g=5
5
gap> C:=MultMat(11,23,5); # Multiplication matrix C1 for e=11, p=23, g=5
[ [ -2, -2, -1, -2, -2, -2, -2, -2, -2, -2, -2 ], 
  [ 0, 0, 0, 0, 1, 0, 0, 1, 0, 0, 0 ], 
  [ 1, 0, 0, 0, 0, 1, 0, 0, 0, 0, 0 ], 
  [ 0, 0, 0, 0, 0, 0, 0, 0, 0, 1, 1 ], 
  [ 0, 1, 0, 0, 0, 1, 0, 0, 0, 0, 0 ], 
  [ 0, 0, 1, 0, 1, 0, 0, 0, 0, 0, 0 ], 
  [ 0, 0, 0, 0, 0, 0, 0, 0, 1, 0, 1 ], 
  [ 0, 1, 0, 0, 0, 0, 0, 0, 1, 0, 0 ], 
  [ 0, 0, 0, 0, 0, 0, 1, 1, 0, 0, 0 ], 
  [ 0, 0, 0, 1, 0, 0, 0, 0, 0, 1, 0 ], 
  [ 0, 0, 0, 1, 0, 0, 1, 0, 0, 0, 0 ] ]
gap> C2:=dComp(C,C,-1);; # C2=C^(2)
gap> C4:=dComp(C2,C2,-1);; # C4=C^(4)
gap> C8:=dComp(C4,C4,-1);; # C8=C^(8)
gap> C3:=dComp(C2,C,-1);; # C3=C^(3)
gap> C11:=dComp(C8,C3,-1);; # C11=C^(11)
gap> P11:=CharacteristicPolynomial(C11);; # P11 is the priod polynomial for r=11
gap> R11:=RootsOfPolynomial(P11);; # roots of P11
gap> L11:=List(R11,x->11*x+1); # roots of the reduced period polynomial P11^*
[ 166665038, 142959444, 52918009, 47121273, 3199967, -19592803, 
  -23093817, -44439427, -58652208, -64390754, -202694722 ]
gap> X11:=L11/23;;
gap> List(X11,x->x mod 23); # X11[3]=1 mod 23
[ 18, 16, 1, 3, 2, 13, 9, 12, 6, 8, 4 ]
gap> List(X11,x->x^2 mod 23);
[ 2, 3, 1, 9, 4, 8, 12, 6, 13, 18, 16 ]
gap> List(X11,x->x^11 mod 23); # X13[i] (i<>3) is of order 11 in F23^x
[ 1, 1, 1, 1, 1, 1, 1, 1, 1, 1, 1 ]
gap> L11[3]; # L11[3] is the exponential Gauss sum g_{11}(11)
52918009
gap> Factors(L11[3]);
[ 23, 53, 43411 ]

gap> PrimitiveRootMod(53); # g=2
2
gap> C:=MultMat(13,53,2); # Multiplication matrix C1 for e=13, p=53, g=2
[ [ -4, -3, -4, -4, -4, -4, -4, -2, -4, -4, -4, -4, -4 ], 
  [ 1, 0, 0, 1, 1, 1, 0, 0, 0, 0, 0, 0, 0 ], 
  [ 0, 0, 0, 0, 1, 0, 0, 0, 1, 0, 0, 1, 1 ],
  [ 0, 1, 0, 0, 0, 0, 0, 1, 1, 0, 0, 0, 1 ], 
  [ 0, 1, 1, 0, 0, 0, 0, 0, 0, 0, 1, 0, 1 ], 
  [ 0, 1, 0, 0, 0, 0, 0, 1, 1, 0, 1, 0, 0 ], 
  [ 0, 0, 0, 0, 0, 0, 2, 0, 0, 1, 0, 1, 0 ], 
  [ 2, 0, 0, 1, 0, 1, 0, 0, 0, 0, 0, 0, 0 ], 
  [ 0, 0, 1, 1, 0, 1, 0, 0, 0, 1, 0, 0, 0 ], 
  [ 0, 0, 0, 0, 0, 0, 1, 0, 1, 0, 1, 1, 0 ], 
  [ 0, 0, 0, 0, 1, 1, 0, 0, 0, 1, 0, 1, 0 ], 
  [ 0, 0, 1, 0, 0, 0, 1, 0, 0, 1, 1, 0, 0 ], 
  [ 0, 0, 1, 1, 1, 0, 0, 0, 0, 0, 0, 0, 1 ] ]
gap> C2:=dComp(C,C,-1);; # C2=C^(2)
gap> C4:=dComp(C2,C2,-1);; # C4=C^(4)
gap> C8:=dComp(C4,C4,-1);; # C8=C^(8)
gap> C5:=dComp(C4,C,-1);; # C5=C^(5)
gap> C13:=dComp(C8,C5,-1);; # C13=C^(13)
gap> P13:=CharacteristicPolynomial(C13);; # P13 is the priod polynomial for r=13
gap> R13:=RootsOfPolynomial(P13) # roots of P13
gap> L13:=List(R13,x->13*x+1); # roots of the reduced period polynomial P13^*
[ 1040615291340, 782475795674, 664438112586, 338244988654, 117899008800, 
  83828569254, -186980700750, -238169301889, -245670171356, -277653262665, 
  -427932303889, -740552966334, -910543059425 ]
gap> X13:=L13/53;;
gap> List(X13,x->x mod 53); # X13[2]=1 mod 53
[ 47, 1, 49, 16, 13, 15, 42, 36, 44, 46, 10, 28, 24 ]
gap> List(X13,x->x^2 mod 53);
[ 36, 1, 16, 44, 10, 13, 15, 24, 28, 49, 47, 42, 46 ]
gap> List(X13,x->x^4 mod 53);
[ 24, 1, 44, 28, 47, 10, 13, 46, 42, 16, 36, 15, 49 ]
gap> List(X13,x->x^13 mod 53); # X13[i] (i<>2) is of order 13 in F53^x
[ 1, 1, 1, 1, 1, 1, 1, 1, 1, 1, 1, 1, 1 ]
gap> L13[2]; # L13[2] is the exponential Gauss sum g_{13}(13)
782475795674
gap> Factors(L13[2]);
[ 2, 53, 7381847129 ]

gap> PrimitiveRootMod(103); # g=5
5
gap> C:=MultMat(17,103,5); # Multiplication matrix C1 for e=17, p=103, g=5
[ [ -4, -6, -6, -6, -6, -6, -6, -6, -6, -6, -5, -4, -6, -6, -6, -6, -6 ], 
  [ 0, 0, 1, 1, 0, 0, 0, 2, 0, 0, 0, 0, 0, 0, 0, 1, 1 ], 
  [ 0, 1, 0, 1, 0, 0, 1, 0, 0, 1, 0, 0, 0, 0, 1, 0, 1 ], 
  [ 0, 1, 1, 0, 0, 1, 0, 1, 0, 1, 0, 0, 1, 0, 0, 0, 0 ], 
  [ 0, 0, 0, 0, 0, 0, 0, 0, 1, 1, 0, 0, 1, 1, 1, 1, 0 ], 
  [ 0, 0, 0, 1, 0, 0, 0, 0, 1, 1, 1, 0, 1, 1, 0, 0, 0 ], 
  [ 0, 0, 1, 0, 0, 0, 2, 0, 0, 0, 0, 0, 0, 0, 1, 0, 2 ], 
  [ 0, 2, 0, 1, 0, 0, 0, 1, 0, 0, 0, 0, 1, 0, 0, 1, 0 ], 
  [ 0, 0, 0, 0, 1, 1, 0, 0, 0, 0, 1, 1, 1, 1, 0, 0, 0 ], 
  [ 0, 0, 1, 1, 1, 1, 0, 0, 0, 0, 0, 0, 0, 1, 1, 0, 0 ], 
  [ 1, 0, 0, 0, 0, 1, 0, 0, 1, 0, 0, 2, 0, 1, 0, 0, 0 ], 
  [ 2, 0, 0, 0, 0, 0, 0, 0, 1, 0, 2, 0, 0, 1, 0, 0, 0 ], 
  [ 0, 0, 0, 1, 1, 1, 0, 1, 1, 0, 0, 0, 0, 0, 0, 1, 0 ], 
  [ 0, 0, 0, 0, 1, 1, 0, 0, 1, 1, 1, 1, 0, 0, 0, 0, 0 ], 
  [ 0, 0, 1, 0, 1, 0, 1, 0, 0, 1, 0, 0, 0, 0, 0, 1, 1 ], 
  [ 0, 1, 0, 0, 1, 0, 0, 1, 0, 0, 0, 0, 1, 0, 1, 0, 1 ], 
  [ 0, 1, 1, 0, 0, 0, 2, 0, 0, 0, 0, 0, 0, 0, 1, 1, 0 ] ]
gap> C2:=dComp(C,C,-1);; # C2=C^(2)
gap> C4:=dComp(C2,C2,-1);; # C4=C^(4)
gap> C8:=dComp(C4,C4,-1);; # C8=C^(8)
gap> C16:=dComp(C8,C8,-1);; # C16=C^(16)
gap> C17:=dComp(C16,C,-1);; # C17=C^(17)
gap> P17:=CharacteristicPolynomial(C17);; # P17 is the priod polynomial for r=17
gap> R17:=RootsOfPolynomial(P17);; # roots of P17
gap> L17:=List(R17,x->17*x+1); # roots of the reduced period polynomial P17^*
[ 670357206530506901, 670088231006862759, 587253242462231659, 
  464742031061114921, 461908111585063663, 356621718684896633, 
  282238003107978403, 163898849457734107, 35922811461007315, 
  -197783211402587952, -205052440856501077, -243310155546790559, 
  -289516205265127375, -373934090375919493, -478731856802195967, 
  -651513206543247755, -1253189038565026183 ]
gap> X17:=L17/103;;
gap> List(X17,x->x mod 103); # X17[16]=-1 mod 103
[ 73, 31, 24, 94, 90, 22, 3, 95, 89, 42, 10, 80, 27, 69, 39, 102, 37 ]
gap> List(X17,x->x^2 mod 103);
[ 76, 34, 61, 81, 66, 72, 9, 64, 93, 13, 100, 14, 8, 23, 79, 1, 30 ]
gap> List(X17,x->x^3 mod 103);
[ 89, 24, 22, 95, 69, 39, 27, 3, 37, 31, 73, 90, 10, 42, 94, 102, 80 ]
gap> List(X17,x->x^17 mod 103); # X17[i] (i<>16) is of order 34 in F103^x
[ 102, 102, 102, 102, 102, 102, 102, 102, 102, 102, 
  102, 102, 102, 102, 102, 102, 102 ]
gap> L17[16]; # L17[16] is the exponential Gauss sum g_{17}(17)
-651513206543247755
gap> Factors(L17[16]);
[ -5, 7, 103, 172709, 1046412659 ]

gap> PrimitiveRootMod(191); # g=19
19
gap> C:=MultMat(19,191,19); # Multiplication matrix C1 for e=19, p=191, g=19
[ [ -10, -10, -10, -10, -10, -10, -9, -8, -8, -10, 
    -10, -8, -10, -10, -10, -10, -10, -10, -8 ],
  [ 0, 2, 0, 1, 2, 2, 0, 0, 0, 0, 1, 0, 0, 0, 1, 1, 0, 0, 0 ],
  [ 0, 0, 0, 0, 0, 1, 1, 1, 1, 0, 1, 0, 2, 0, 0, 1, 1, 0, 1 ],
  [ 0, 1, 0, 0, 0, 1, 0, 0, 0, 0, 2, 1, 0, 1, 0, 1, 0, 1, 2 ],
  [ 0, 2, 0, 0, 0, 1, 1, 1, 0, 0, 1, 1, 0, 0, 0, 0, 0, 1, 2 ],
  [ 0, 2, 1, 1, 1, 0, 1, 0, 0, 0, 1, 1, 0, 0, 1, 0, 0, 1, 0 ],
  [ 1, 0, 1, 0, 1, 1, 0, 0, 0, 1, 0, 0, 1, 1, 1, 1, 0, 1, 0 ],
  [ 2, 0, 1, 0, 1, 0, 0, 0, 0, 2, 0, 0, 0, 1, 0, 1, 2, 0, 0 ],
  [ 2, 0, 1, 0, 0, 0, 0, 0, 2, 0, 0, 1, 1, 1, 0, 0, 1, 1, 0 ],
  [ 0, 0, 0, 0, 0, 0, 1, 2, 0, 0, 1, 1, 2, 1, 1, 0, 0, 0, 1 ],
  [ 0, 1, 1, 2, 1, 1, 0, 0, 0, 1, 0, 0, 0, 0, 0, 0, 1, 2, 0 ],
  [ 2, 0, 0, 1, 1, 1, 0, 0, 1, 1, 0, 2, 0, 1, 0, 0, 0, 0, 0 ],
  [ 0, 0, 2, 0, 0, 0, 1, 0, 1, 2, 0, 0, 2, 0, 1, 0, 1, 0, 0 ],
  [ 0, 0, 0, 1, 0, 0, 1, 1, 1, 1, 0, 1, 0, 1, 0, 1, 0, 1, 1 ],
  [ 0, 1, 0, 0, 0, 1, 1, 0, 0, 1, 0, 0, 1, 0, 0, 2, 1, 1, 1 ],
  [ 0, 1, 1, 1, 0, 0, 1, 1, 0, 0, 0, 0, 0, 1, 2, 0, 2, 0, 0 ],
  [ 0, 0, 1, 0, 0, 0, 0, 2, 1, 0, 1, 0, 1, 0, 1, 2, 0, 1, 0 ],
  [ 0, 0, 0, 1, 1, 1, 1, 0, 1, 0, 2, 0, 0, 1, 1, 0, 1, 0, 0 ],
  [ 2, 0, 1, 2, 2, 0, 0, 0, 0, 1, 0, 0, 0, 1, 1, 0, 0, 0, 0 ] ]
gap> C2:=dComp(C,C,-1);; # C2=C^(2)
gap> C4:=dComp(C2,C2,-1);; # C4=C^(4)
gap> C8:=dComp(C4,C4,-1);; # C8=C^(8)
gap> C16:=dComp(C8,C8,-1);; # C16=C^(16)
gap> C3:=dComp(C2,C,-1);; # C3=C^(3)
gap> C19:=dComp(C16,C3,-1);; # C19=C^(19)
gap> P19:=CharacteristicPolynomial(C19);; # P19 is the priod polynomial for r=19
gap> R19:=RootsOfPolynomial(P19);; # roots of P19
gap> L19:=List(R19,x->19*x+1); # roots of the reduced period polynomial P19^*
[ 55891098112086637001228, 21343147495425176673226, 16127550524178031129657, 
  14355859672843887131634, 10195021892556248415182, 7777342710886644977131, 
  5776338119599847350627, 5080513863740739683465, 2801935824159299141695, 
  859413598509266105572, -1967831693815607448660, -2042500136091280335075, 
  -5599389538599795630810, -11060282774339943468556, -14117536712596171711328, 
  -19950229182831388897609, -27250892079645375357179, -28187266231514473770821, 
  -30032293464551740989379 ]
gap> X19:=List(L19,x->x/191);; 
gap> List(X19,x->x mod 191);
[ 58, 182, 157, 178, 94, 21, 126, 151, 142, 57, 
  99, 113, 146, 88, 112, 143, 105, 137, 183 ]
gap> List(X19,x->x^10 mod 191); # X19[9]^10=1 mod 191
[ 52, 136, 107, 121, 125, 30, 25, 154, 1, 32, 
  36, 69, 6, 160, 5, 150, 153, 177, 180 ]
gap> List(X19,x->x^2 mod 191);
[ 117, 81, 10, 169, 50, 59, 23, 72, 109, 2, 
  60, 163, 115, 104, 129, 12, 138, 51, 64 ]
gap> List(X19,x->x^5 mod 191); # X19[9] is of order 10 in F191^x
[ 166, 161, 38, 11, 70, 139, 186, 66, 190, 37, 
  185, 159, 31, 55, 14, 155, 41, 122, 84 ]
gap> List(X19,x->x^19 mod 191); # X19[i] (i<>9) is of order 190 in F191^x
[ 152, 152, 152, 152, 152, 152, 152, 152, 152, 152, 
  152, 152, 152, 152, 152, 152, 152, 152, 152 ]
gap> L19[9]; # L19[9] is the exponential Gauss sum g_{19}(19)
2801935824159299141695
gap> Factors(L19[9]);
[ 5, 191, 509, 26374987, 218546963 ]

gap> PrimitiveRootMod(47); # g=5
5
gap> C:=MultMat(23,47,5);  # Multiplication matrix C1 for e=23, p=47, g=5
[ [ -2, -2, -2, -2, -2, -2, -2, -2, -2, -2, -2, -2, 
    -2, -2, -2, -2, -2, -2, -1, -2, -2, -2, -2 ],
  [ 0, 0, 0, 0, 0, 0, 0, 0, 0, 0, 0, 0, 0, 1, 0, 1, 0, 0, 0, 0, 0, 0, 0 ],
  [ 0, 0, 0, 0, 0, 1, 1, 0, 0, 0, 0, 0, 0, 0, 0, 0, 0, 0, 0, 0, 0, 0, 0 ],
  [ 0, 0, 0, 0, 0, 0, 0, 0, 0, 0, 0, 0, 0, 0, 0, 0, 1, 0, 0, 0, 0, 1, 0 ],
  [ 0, 0, 0, 0, 0, 0, 0, 0, 0, 0, 0, 1, 0, 0, 0, 0, 0, 0, 0, 0, 0, 1, 0 ],
  [ 0, 0, 1, 0, 0, 1, 0, 0, 0, 0, 0, 0, 0, 0, 0, 0, 0, 0, 0, 0, 0, 0, 0 ],
  [ 0, 0, 1, 0, 0, 0, 0, 0, 0, 0, 0, 0, 0, 0, 1, 0, 0, 0, 0, 0, 0, 0, 0 ],
  [ 0, 0, 0, 0, 0, 0, 0, 0, 0, 0, 1, 0, 0, 0, 0, 0, 0, 0, 0, 1, 0, 0, 0 ],
  [ 0, 0, 0, 0, 0, 0, 0, 0, 0, 1, 0, 0, 0, 0, 0, 0, 0, 1, 0, 0, 0, 0, 0 ],
  [ 0, 0, 0, 0, 0, 0, 0, 0, 1, 0, 0, 0, 0, 0, 0, 1, 0, 0, 0, 0, 0, 0, 0 ],
  [ 0, 0, 0, 0, 0, 0, 0, 1, 0, 0, 0, 1, 0, 0, 0, 0, 0, 0, 0, 0, 0, 0, 0 ],
  [ 0, 0, 0, 0, 1, 0, 0, 0, 0, 0, 1, 0, 0, 0, 0, 0, 0, 0, 0, 0, 0, 0, 0 ],
  [ 0, 0, 0, 0, 0, 0, 0, 0, 0, 0, 0, 0, 0, 0, 0, 0, 1, 0, 0, 0, 0, 0, 1 ],
  [ 0, 1, 0, 0, 0, 0, 0, 0, 0, 0, 0, 0, 0, 0, 0, 0, 0, 0, 0, 0, 1, 0, 0 ],
  [ 0, 0, 0, 0, 0, 0, 1, 0, 0, 0, 0, 0, 0, 0, 0, 0, 0, 0, 0, 0, 0, 0, 1 ],
  [ 0, 1, 0, 0, 0, 0, 0, 0, 0, 1, 0, 0, 0, 0, 0, 0, 0, 0, 0, 0, 0, 0, 0 ],
  [ 0, 0, 0, 1, 0, 0, 0, 0, 0, 0, 0, 0, 1, 0, 0, 0, 0, 0, 0, 0, 0, 0, 0 ],
  [ 0, 0, 0, 0, 0, 0, 0, 0, 1, 0, 0, 0, 0, 0, 0, 0, 0, 0, 0, 1, 0, 0, 0 ],
  [ 1, 0, 0, 0, 0, 0, 0, 0, 0, 0, 0, 0, 0, 0, 0, 0, 0, 0, 0, 0, 1, 0, 0 ],
  [ 0, 0, 0, 0, 0, 0, 0, 1, 0, 0, 0, 0, 0, 0, 0, 0, 0, 1, 0, 0, 0, 0, 0 ],
  [ 0, 0, 0, 0, 0, 0, 0, 0, 0, 0, 0, 0, 0, 1, 0, 0, 0, 0, 1, 0, 0, 0, 0 ],
  [ 0, 0, 0, 1, 1, 0, 0, 0, 0, 0, 0, 0, 0, 0, 0, 0, 0, 0, 0, 0, 0, 0, 0 ],
  [ 0, 0, 0, 0, 0, 0, 0, 0, 0, 0, 0, 0, 1, 0, 1, 0, 0, 0, 0, 0, 0, 0, 0 ] ]
gap> C2:=dComp(C,C,-1);; # C2=C^(2)
gap> C4:=dComp(C2,C2,-1);; # C4=C^(4)
gap> C8:=dComp(C4,C4,-1);; # C8=C^(8)
gap> C16:=dComp(C8,C8,-1);; # C16=C^(16)
gap> C3:=dComp(C2,C,-1);; # C3=C^(3)
gap> C7:=dComp(C4,C3,-1);; # C7=C^(7)
gap> C23:=dComp(C16,C7,-1);; # C23=C^(23)
gap> P23:=CharacteristicPolynomial(C23);; # P23 is the priod polynomial for r=23
gap> R23:=RootsOfPolynomial(P23);; # roots of P23
gap> L23:=List(R23,x->23*x+1); # roots of the reduced period polynomial P23^*
[ 142339874433137221525, 118065170266710759348, 90401156916499269233, 
  55373954393947818396, 55099193646218848063, 42654144441633168738, 
  42378310496086559486, 36268843595424974262, 35660322726333362220, 
  34760976326466677323, 28446187386897694871, 17050560055492972666, 
  -492643134044787602, -9055501540645768832, -16107397702852877550, 
  -31331987537967805455, -36858108220907188977, -38922282154313258582, 
  -39922556198217904917, -67269172064831016965, -90222434992270940059, 
  -151942428479066503710, -216374182659731273482 ]
gap> X23:=L23/47;;
gap> List(X23,x->x mod 47); # X23[13]=1 mod 47
[ 9, 32, 24, 4, 17, 12, 8, 2, 3, 37, 14, 18, 
  1, 21, 36, 16, 25, 28, 42, 27, 34, 6, 7 ]
gap> List(X23,x->x^2 mod 47);
[ 34, 37, 12, 16, 7, 3, 17, 4, 9, 6, 8, 42, 
  1, 18, 27, 21, 14, 32, 25, 24, 28, 36, 2 ]
gap> List(X23,x->x^23 mod 47); # X23[i] (i<>13) is of order 23 in F47^x
[ 1, 1, 1, 1, 1, 1, 1, 1, 1, 1, 1, 1, 1, 1, 1, 1, 1, 1, 1, 1, 1, 1, 1 ]
gap> L23[13]; # L23[13] is the exponential Gauss sum g_{23}(23)
-492643134044787602
gap> Factors(L23[13]);
[ -2, 17, 43, 47, 7169472509893 ]
\end{verbatim}

\begin{acknowledgment}
The authors would like to thank one of the referees 
who read the manuscript very carefully and gave helpful suggestions. 
\end{acknowledgment}



\begin{thebibliography}{BMW82}

\bibitem[BE81]{BE81}
B. C. Berndt, R. J. Evans, 
{\it The determination of Gauss sums}, 
Bull. Amer. Math. Soc. (N.S.) \textbf{5} (1981), 107--129.

\bibitem[BEW98]{BEW98}
B. C. Berndt, R. J. Evans, K. S. Williams,
\textit{Gauss and Jacobi sums},
Canadian Mathematical Society Series of Monographs and Advanced Texts, 
Wiley, New York, 1998.

\bibitem[BM72]{BM72}
L. D. Baumert, R. J. McEliece, 
{\it Weights of irreducible cyclic codes}, 
Information and Control \textbf{20} (1972), 158--175. 

\bibitem[BMW82]{BMW82}
L. D. Baumert, W. H. Mills, R. L. Ward,
\textit{Uniform cyclotomy}, 
J. Number Theory \textbf{14} (1982), 67--82.

\bibitem[DH35]{DH35}
H. Davenport, H. Hasse,
{\it Die Nullstellen der Kongruenzzetafunktionen in gewissen zyklischen 
F\"{a}llen}, (German), 
J. Reine Angew. Math. \textbf{172} (1935), 151--182. 

\bibitem[Dic35]{Dic35}
L. E. Dickson, 
\textit{Cyclotomy, higher congruences and Waring's problem}, 
Amer. J. Math. \textbf{57} (1935), 391--424.

\bibitem[DY13]{DY13}
C. Ding, J. Yang, 
{\it Hamming weights in irreducible cyclic codes}, 
Discrete Math. \textbf{313} (2013), 434--446.

\bibitem[GAP]{GAP} 
The GAP Group, GAP -- Groups, Algorithms, and Programming, 
Version 4.11.1; 2021. (http://www.gap-system.org).

\bibitem[Gau01]{Gau01}
C. F. Gauss, 
\textit{Disquisitiones Arithmeticae}, 1801.

\bibitem[HH05a]{HH05a}
K. Hashimoto, A. Hoshi, 
{\it Families of cyclic polynomials obtained from geometric generalization 
of Gaussian period relations}, 
Math. Comp. \textbf{74} (2005), 1519--1530.

\bibitem[HH05b]{HH05b}
K. Hashimoto, A. Hoshi, 
{\it Geometric generalization of Gaussian period relations with application 
to Noether's problem for meta-cyclic groups}, 
Tokyo J. Math. \textbf{28} (2005), 13--32.

\bibitem[Hos03]{Hos03}
A. Hoshi, 
\textit{Multiplicative quadratic forms on algebraic varieties}, 
Proc. Japan Acad. Ser. A Math. Sci. \textbf{79} (2003), 71--75.

\bibitem[Hos06]{Hos06}
A. Hoshi, \textit{Explicit lifts of quintic Jacobi sums and period 
polynomials for $\F_q$}, 
Proc. Japan Acad. Ser. A Math. Sci. \textbf{82} (2006), 87--92.


\bibitem[KR85a]{KR85a}
S. A. Katre, A. R. Rajwade,
\textit{Complete solution of the cyclotomic problem in $\F_q^{\ast}$ 
for any prime modulus $l$, $q = p^{\alpha}$, $p \equiv 1\ ({\rm mod}\ l)$},
Acta Arith. \textbf{45} (1985), 183--199.

\bibitem[KR85b]{KR85b}
S. A. Katre, A. R. Rajwade,
\textit{Unique determination of cyclotomic numbers of order five},
Manuscripta Math. \textbf{53} (1985), 65--75.

\bibitem[Leh51]{Leh51} 
E. Lehmer, 
{\it The quintic character of $2$ and $3$}, 
Duke Math. J. \textbf{18} (1951), 11--18. 

\bibitem[Leh88]{Leh88} 
E. Lehmer, {\it Connection between Gaussian periods and cyclic units}, 
Math. Comp. \textbf{50} (1988), 535--541.

\bibitem[LW75]{LW75}
P. A. Leonard, K. S. Williams,
\textit{The cyclotomic numbers of order seven},
Proc. Amer. Math. Soc. \textbf{51} (1975), 295--300.

\bibitem[McE74]{McE74} 
R. J. McEliece, {\it Irreducible cyclic codes and Gauss sums}, 
Combinatorics (Proc. NATO Advanced Study Inst., Breukelen, 1974), 
Part 1: Theory of designs, finite geometry and coding theory, 
pp. 179--196, Math. Centre Tracts, No. 55, Math. Centrum, Amsterdam, 1974. 

\bibitem[MR72]{MR72}
R. J. McEliece, H. Rumsey, Jr, 
{\it Euler products, cyclotomy, and coding}, 
J. Number Theory \textbf{4} (1972), 302--311. 

\bibitem[Mye81]{Mye81}
G. Myerson,
\textit{Period polynomials and Gauss sums for finite fields},
Acta Arith. \textbf{39} (1981), 251--264.

\bibitem[PAR82]{PAR82}
J. C. Parnami, M. K. Agrawal, A. R. Rajwade, 
\textit{Jacobi sums and cyclotomic numbers for a finite field},
Acta Arith. \textbf{41} (1982), 1--13.

\bibitem[SW88]{SW88}
R. Schoof, L. C. Washington, 
{\it Quintic polynomials and real cyclotomic fields with large class numbers}, 
Math. Comp. \textbf{50} (1988), 543--556. 

\bibitem[Ter99]{Ter99}
A. Terras, 
{\it Fourier analysis on finite groups and applications}, 
London Mathematical Society Student Texts, 43, 
Cambridge University Press, Cambridge, 1999, x+442 pp. 

\bibitem[Tha96]{Tha96}
F. Thaine, 
\textit{Properties that characterize Gaussian periods and cyclotomic numbers}, 
Proc. Amer. Math. Soc. \textbf{124} (1996), 35--45.  

\bibitem[Tha99]{Tha99}
F. Thaine, 
\textit{On the coefficients of Jacobi sums in prime cyclotomic fields},
Trans. Amer. Math. Soc. \textbf{351} (1999), 4769--4790.

\bibitem[Tha00]{Tha00}
F. Thaine, 
\textit{Families of irreducible polynomials of Gaussian periods and matrices 
of cyclotomic numbers},
Math. Comp. \textbf{69} (2000), 1653--1666.

\bibitem[Tha01]{Tha01}
F. Thaine, 
\textit{Jacobi sums and new families of irreducible polynomials of Gaussian periods},
Math. Comp. \textbf{70} (2001), 1617--1640.

\bibitem[Tha04]{Tha04}
F. Thaine, 
\textit{Cyclic polynomials and the multiplication matrices of their roots},
J. Pure Appl. Algebra \textbf{188} (2004), 247--286.

\bibitem[Tha08]{Tha08} 
F. Thaine, 
{\it On the construction of families of cyclic polynomials whose roots 
are units},
Experiment. Math. \textbf{17} (2008), 315--331.

\bibitem[Wam02]{Wam02}
P. van Wamelen, 
\textit{Jacobi sums over finite fields},
Acta Arith. \textbf{102} (2002), 1--20.

\bibitem[Wei49]{Wei49}
A. Weil, 
{\it Numbers of solutions of equations in finite fields}, 
Bull. Amer. Math. Soc. \textbf{55} (1949), 497--508. 
\end{thebibliography}
\end{document}